
\documentclass[11pt]{article}

\newcommand{\reals}{\mbox{$\mathbb R$}}

\newcommand{\nats}{\mbox{$\mathbb N$}}
\newcommand{\ints}{\mbox{$\mathbb Z$}}

\newcommand{\comment}[1]{}

\setlength{\textwidth}{6.5in}  
\setlength{\textheight}{8.6in} 
\setlength{\oddsidemargin}{0in} 
\setlength{\evensidemargin}{0in} 
\setlength{\headheight}{0in}
\setlength{\headsep}{10pt}
\setlength{\topmargin} {0.5in} 
\setlength{\itemsep}{0in}
\setlength{\footskip}{0.4in}

\pagestyle{plain}
\usepackage{amsmath, amsthm, amsfonts, amssymb, latexsym, stmaryrd}

\comment{

\def\squarebox#1{\hbox to #1{\hfill\vbox to #1{\vfill}}}
\def\qed{\hspace*{\fill}
        \vbox{\hrule\hbox{\vrule\squarebox{.667em}\vrule}\hrule}\smallskip}
\newenvironment{proof}{\begin{trivlist}
  \item[\hspace{\labelsep}{\em\noindent Proof.~}]
  }{\qed\end{trivlist}}
}

\newtheorem{lemma}{Lemma}[section]
\newtheorem{theorem}[lemma]{Theorem}
\newtheorem{corollary}[lemma]{Corollary}
\newtheorem{proposition}[lemma]{Proposition}
\newtheorem{claim}[lemma]{Claim}
\newtheorem{observation}[lemma]{Observation}
\newtheorem{definition}[lemma]{Definition}

\def\squareforqed{\hbox{\rlap{$\sqcap$}$\sqcup$}}
\def\qed{\ifmmode\squareforqed\else{\unskip\nobreak\hfil
\penalty50\hskip1em\null\nobreak\hfil\squareforqed
\parfillskip=0pt\finalhyphendemerits=0\endgraf}\fi}

\newlength{\tablength}
\newlength{\spacelength}
\settowidth{\tablength}{\mbox{\ \ \ \ \ \ \ \ }}
\settowidth{\spacelength}{\mbox{\ }}

\newcommand{\tabstar}{\hspace*{\tablength}}
\newcommand{\spacestar}{\hspace*{\spacelength}}
\def\obeytabs{\catcode`\^^I=\active}
{\obeytabs\global\let^^I=\tabstar}
{\obeyspaces\global\let =\spacestar}
\newenvironment{display}{\begingroup\obeylines\obeyspaces\obeytabs}{\endgroup}
\newenvironment{prog}{\begin{display}\parskip0pt\sf}{\end{display}}

\title{Extremal subgraphs of the $d$-dimensional grid graph}

\author{
{\sl Geir Agnarsson}
\thanks{Department of Mathematical Sciences,
George Mason University, 
MS 3F2, 
4400 University Drive, 
Fairfax, VA -- 22030, 
{\tt geir@math.gmu.edu}}
\and
{\sl Kshitij Lauria}
\thanks{Computer Science Department,
Brown University,
Box 1910, 
115 Waterman St., 4th Floor
Providence, RI -- 02912,
{\tt klauria@cs.brown.edu}} 
} 

\date{}

\begin{document}

\maketitle

\begin{abstract}
For each natural number $n$ we determine, both
asymptotically and exactly, the maximum number
of edges an induced subgraph of order $n$ of 
the $d$-dimension a grid graph ${\ints}^d$ can have.
The asymptotic bound is obtained by using a theorem
Bollob\'{a}s and Thomason, and the exact bound is obtained by
induction. This generalizes some earlier results for the case $d=2$
on one hand, and for $n\leq 2^d$ on the other.

\vspace{3 mm}

\noindent {\bf 2000 MSC:} 
05A15, 
05C35. 

\vspace{2 mm}

\noindent {\bf Keywords:}
rectangular grid,
induced subgraphs.
\end{abstract}

\section{Introduction}
\label{sec:intro}

The purpose of this article is to determine the maximum
number of edges an induced subgraph on $n$ vertices 
of the $d$-dimensional rectangular grid graph ${\ints}^d$
can have. The very first non-trivial result in an exact manner
for the case $d=2$ appears in~\cite{Harary-Harborth},
where it is shown that this maximum number of edges
is given by $\lfloor 2n - 2\sqrt{n}\rfloor$. Some 
other interesting and related exact results appear in~\cite{Brass},
where the author Peter Bra{\ss} studies $f(n,k)$, the maximum
number of unit distances among $n$ points in the plane, where
the additional restriction is added that only those unit distances
are counted that are among a fixed set of $k$ directions. Here
the maximum is taken over all sets of $n$ points and all sets
of $k$ directions. The case $k=1$ is trivial, whereas for the 
case $k=2$ it suffices to consider subgraphs of ${\ints}^2$,
and so it coincides with the mentioned result from~\cite{Harary-Harborth},
and so $f(n,2) = \lfloor 2n - 2\sqrt{n}\rfloor$. Other values of $f(n,k)$ have
not yet been determined exactly.
 
In this paper we assume $d$ to be fixed and
$n$ an unrestricted positive integer variable. Note that when $n\leq 2^d$, 
the problem reduces to determine the maximum number
of edges of an induced subgraph on $n$ vertices 
of the $d$-dimensional hypercube $Q_d$, a study already
done in part in the 1970's as described in~\cite{Geir-Hcube}.
We will briefly revisit this case in the last Section~\ref{sec:some-obs}.
In~\cite{Geir-Hcube} a recap and references of know results regarding the 
case $n\leq 2^d$ and induced subgraphs of the hypercube are presented.

The considerations in this article were in part initially inspired by 
the heuristic integer sequence 
$0,1,2,4,5,7,9,12,13,15,17,20,21,23,25,\ldots$~\cite[A007818]{wrong}, 
describing the maximal number of edges joining $n = 1, 2, 3, \ldots$
vertices in the cubic rectangular grid ${\ints}^3$, for which no general
formula nor procedure to compute it is given. -- First we set forth 
our basic terminology and definitions.

\paragraph{Notation and terminology}
The set of integers will be denoted by $\ints$,
the set of natural numbers $\{1,2,3,\ldots\}$ by $\nats$, 
and the set $\{1,2,3,\ldots,n\}$ by $[n]$.
Unless otherwise stated, all graphs in this article
will be finite, simple and undirected. For a graph $G$, its
set of vertices will be denoted by $V(G)$ and its set
of edges by $E(G)$. Clearly $E(G)\subseteq \binom{V(G)}{2}$,
the set of all 2-element subsets of $V(G)$. We will sometimes denote
an edge with endvertices $u$ and $v$ by $uv$ instead of the
actual 2-set $\{u,v\}$. The {\em order}
of $G$ is $|V(G)|$ and the {\em size} of $G$ is $|E(G)|$.
By an {\em induced subgraph} $H$ of $G$ we mean a subgraph
$H$ such that $V(H)\subseteq V(G)$ in the usual set theoretic
sense, and such that if $u,v\in V(H)$ and $uv\in E(G)$, then
$uv\in E(H)$. If $U\subseteq V(G)$ then the subgraph of $G$ 
induced by $U$ will be denoted by $G[U]$. For $d\in\nats$, a 
{\em rectangular grid}
${\ints}^d$ in our context is a infinite graph with the point set
${\ints}^d$ as its vertices and where two points
$\tilde{x} = (x_1,\ldots,x_d)$ and $\tilde{y} = (y_1,\ldots,y_d)$
are connected by an edge iff the {\em Manhattan distance}
$D(\tilde{x},\tilde{y}) = \sum_{i=1}^d |x_i - y_i| = 1$. We will
talk about ${\ints}^d$ both as a point set and an infinite
graph. Most of the times we will restrict ourselves to the subgraph 
${\nats}^d$ of ${\ints}^d$. 
For $I\subseteq [d]$ let $\pi_I : {\ints}^d \rightarrow {\ints}^{|I|}$
be the projection where all the coordinates of $\tilde{x}$ that are not
in $I$ are omitted. Many times we will omit the set-brackets
and simply list the present coordinates; for example, 
$\pi_{\hat{\imath}}$ and $\pi_{1,\ldots,i-1,i+1,\ldots,d}$ will mean the projection
$\pi_{\{1,\ldots,i-1,i+1,\ldots,d\}}$ onto ${\ints}^{d-1}$.
Adopting this notation for sets we will denote the complement
of $I\subseteq [d]$ by either $\overline{I}$ or $\widehat{I}$,
when emphasizing the fact that we are omitting elements from
the index set $I$. 

\paragraph{Organization of article}
The rest of this article is organized as follows.
In Section~\ref{sec:f-nested} we show that when 
considering the maximum number of edges a finite set
$S\subseteq {\ints}^d$ of a given order can induce, we can assume that
the slices of $S$ perpendicular to each axis are nested,
in the sorted ``Tower of Hanoi'' fashion, or more precisely,
we can assume $S$ to be ``fully nested'' in the sense of 
Definition~\ref{def:nested} here below.

In Section~\ref{sec:asymp} we use a result by
Bollob\'{a}s and Thomason~\cite{Bollobas-Thomason} to
obtain a tight asymptotic upper bound for the maximum number
$E_d(n)$ of edges a set $S\subseteq {\ints}^d$ with $|S| = n$
can induce, for fixed $d$ and $n$.

In Section~\ref{sec:rec-ineq} we derive an important
recursive inequality for $E_d(n)$ as stated in Lemma~\ref{lmm:rec-ineq}.

In Section~\ref{sec:cubics} we 
introduce a specific class of fully nested sets, {\em $d$-cubicles}
$\llbracket n\rrbracket^d$ for each $n\in\nats$, and 
start verifying that these $d$-cubicles in ${\nats}^d$ are sets
that have exactly the maximum $E_d(n)$ number of induced edges.
This will be done by induction on $n+d$.

In Section~\ref{sec:E=F} we prove some important properties
of the $d$-cubicles $\llbracket n\rrbracket^d$ and complete
the inductive argument from previous Section~\ref{sec:cubics}
and obtain an exact formula for $E_d(n)$, as stated in 
Theorem~\ref{thm:main}, the main theorem of this paper. 

In the final Section~\ref{sec:some-obs} we state
some corollaries we obtain from Theorem~\ref{thm:main}
when we consider some special cases, which have been
derived and reported in the literature. Hence, 
we point out how Theorem~\ref{thm:main} is a generalization
of some celebrated known results. 

\section{Fully nested sets}
\label{sec:f-nested}

Let $S\subseteq {\ints}^d$ of order $n$. We denote
the number of edges of the induced subgraph $G[S]$
of ${\ints}^d$ by $E_d(S)$, and let
\begin{equation}
\label{eqn:defE(n)}
E_d(n) = \max\{E_d(S): S\subseteq {\ints}^d, \ |S| = n\}
\end{equation}
be the maximum number of edges $G[S]$ can have.
The main objective of this article is to determine
$E_d(n)$, both asymptotically and exactly.
By translation, and without loss of generality,  
we may assume that $S\subseteq {\nats}^d$. 
For $i\in [d]$ let $g_i$ be the {\em gravity along $i$-th axis} 
that acts on $S$ in the following way: For each 
$\tilde{y}\in \pi_{\hat{\imath}}(S)$ order the elements of 
$S\cap \pi_{\hat{\imath}}^{-1}(\tilde{y})$ 
linearly by their $i$-coordinate, say 
$\tilde{x}_1,\tilde{x}_2,\tilde{x}_3,\ldots$, and then replace
the $i$-th coordinate $\pi_i(\tilde{x}_h)$ 
in each $\tilde{x}_h$ by its placement $h$ in this linear
order, thereby obtaining the set
$g_i(S\cap \pi_{\hat{\imath}}^{-1}(\tilde{y}))$ that induces
a path on $|S\cap \pi_{\hat{\imath}}^{-1}(\tilde{y})|$ points
in ${\nats}^d$ parallel to the $i$-th axis. From the partition 
$S = \bigcup_{\tilde{y}\in\pi_{\hat{\imath}}(S)}
(S\cap \pi_{\hat{\imath}}^{-1}(\tilde{y}))$, we let
\[
g_i(S) :=
\bigcup_{\tilde{y}\in\pi_{\hat{\imath}}(S)}
g_i(S\cap \pi_{\hat{\imath}}^{-1}(\tilde{y})).
\]
One can imagine the points of $S$ represented as cube-like blocks
in zero-gravity $d$-space, with the edges of the blocks parallel
to the axes, and $g_i(S)$ the location of the blocks
after the gravity $g_i$ pulls the set $S$ of blocks down towards 
the hyperplane with $i$-th coordinate equal $1$. 

Clearly we have $g_i^2(S) = g_i(g_i(S)) = g_i(S)$.
Note that in general $g_i(g_j(S))\neq g_j(g_j(S))$, so
the operators $g_1,\ldots, g_d$ do not commute. We let 
$g := g_1g_2\cdots g_{d-1}g_d$ be the {\em total gravity}
acting on the set $S$.
\begin{definition}
\label{def:nested}
A set $S\subseteq {\nats}^d$ is {\em $i$-nested} if
\[
\pi_{\hat{\imath}}(S\cap \pi_i^{-1}(1))\supseteq 
\pi_{\hat{\imath}}(S\cap \pi_i^{-1}(2))\supseteq 
\pi_{\hat{\imath}}(S\cap \pi_i^{-1}(3))\supseteq\cdots.
\]
The set $S$ is {\em fully nested} of it is $i$-nested
for each $i\in[d]$.
\end{definition}
Note that $g_i(S) = S$ holds iff for each $\tilde{y}\in\pi_{\hat{\imath}}(S)$
we have $\pi_i(S\cap\pi_{\hat{\imath}}^{-1}(\tilde{y})) = [k]$ where
$k = |S\cap\pi_{\hat{\imath}}^{-1}(\tilde{y})|$. From this,
and the mere definition of the inverse image in general,  
we see that for a set $S\subseteq {\nats}^d$ we then have the
following.
\begin{observation}
\label{obs:gi=inest}
$S$ is $i$-nested iff $g_i(S) = S$.
\end{observation}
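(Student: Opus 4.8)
The plan is to prove the two directions of the equivalence separately, both by unwinding the definitions of $g_i$ and of $i$-nestedness.

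\emph{($\Leftarrow$) If $g_i(S) = S$ then $S$ is $i$-nested.}
First I would record, as the text already notes, that $g_i(S) = S$ forces, for every $\tilde{y}\in\pi_{\hat{\imath}}(S)$, the fibre coordinate set $\pi_i(S\cap\pi_{\hat{\imath}}^{-1}(\tilde{y}))$ to be an initial segment $[k_{\tilde{y}}]$ with $k_{\tilde{y}} = |S\cap\pi_{\hat{\imath}}^{-1}(\tilde{y})|$; this is immediate because $g_i$ reindexes each vertical fibre by its rank. Now fix $j\geq 1$ and take any $\tilde{y}\in\pi_{\hat{\imath}}(S\cap\pi_i^{-1}(j+1))$. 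Then $\tilde{y}$ lies in the $\hat{\imath}$-projection of the fibre over $\tilde{y}$, and that fibre has an element with $i$-coordinate $j+1$, so $k_{\tilde{y}}\geq j+1$; since the fibre's $i$-coordinates are exactly $[k_{\tilde{y}}]\supseteq[j+1]\ni j$, the point with $i$-coordinate $j$ is in $S$, whence $\tilde{y}\in\pi_{\hat{\imath}}(S\cap\pi_i^{-1}(j))$. This gives the containment $\pi_{\hat{\imath}}(S\cap\pi_i^{-1}(j))\supseteq\pi_{\hat{\imath}}(S\cap\pi_i^{-1}(j+1))$ for all $j$, which is exactly $i$-nestedness.

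\emph{($\Rightarrow$) If $S$ is $i$-nested then $g_i(S) = S$.}
By Observation's characterization it suffices to show every fibre $\pi_i(S\cap\pi_{\hat{\imath}}^{-1}(\tilde{y}))$ is an initial segment $[k_{\tilde{y}}]$. Suppose not: some fibre contains a point with $i$-coordinate $j+1$ but not the point with $i$-coordinate $j$, for some $j\geq 1$. Then $\tilde{y}\in\pi_{\hat{\imath}}(S\cap\pi_i^{-1}(j+1))$ but $\tilde{y}\notin\pi_{\hat{\imath}}(S\cap\pi_i^{-1}(j))$, contradicting the $i$-nested chain of containments (which in particular gives $\pi_{\hat{\imath}}(S\cap\pi_i^{-1}(j))\supseteq\pi_{\hat{\imath}}(S\cap\pi_i^{-1}(j+1))$). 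Hence each fibre's $i$-coordinates form $[k_{\tilde{y}}]$, and $g_i(S) = S$ follows.

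The argument is entirely a matter of translating between "each vertical fibre is downward closed" (the fixed-point condition) and "the slices nest as one moves up the $i$-axis" (Definition~\ref{def:nested}); the only mild subtlety, and the step I would be most careful about, is keeping straight that a point $\tilde{x}$ with $\pi_i(\tilde{x}) = j$ lies in $S$ if and only if $\pi_{\hat{\imath}}(\tilde{x})$ lies in $\pi_{\hat{\imath}}(S\cap\pi_i^{-1}(j))$ \emph{and} that this equivalence is what lets the slice containments and the fibre initial-segment property pass back and forth. There is no real obstacle here; the statement is essentially a reformulation, and the proof is a short unwinding of definitions.
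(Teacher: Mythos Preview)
Your proposal is correct and follows exactly the approach the paper takes: the text immediately preceding the observation records that $g_i(S)=S$ iff every fibre $\pi_i(S\cap\pi_{\hat\imath}^{-1}(\tilde y))$ is an initial segment $[k]$, and then simply asserts the observation ``from this, and the mere definition of the inverse image''. Your argument is a careful unwinding of precisely that remark, spelling out both directions of the equivalence between ``each fibre is an initial segment'' and the nesting chain of Definition~\ref{def:nested}; there is nothing to add or correct.
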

By symmetry we note that if $p : {\nats}^d \rightarrow {\nats}^d$
is the linear map that permutes the coordinates and
$S\subseteq {\nats}^d$ is a set, then $S$ is fully nested
iff $p(S)$ is fully nested.

Our next objective is to prove the following theorem,
the statement of which is seemingly obvious for 
dimensions $d\leq 3$.
\begin{theorem}
\label{thm:fully-nested}
For $S\subseteq {\nats}^d$ we have 
that $g(S) = S$ iff $g_i(S) = S$ for each $i\in [d]$, that is
$S$ is $i$-nested for each $i$.
\end{theorem}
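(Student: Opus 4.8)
The plan is to prove the two directions separately. The direction "$g_i(S)=S$ for each $i$ $\Rightarrow$ $g(S)=S$" is immediate: if $g_i(S)=S$ for every $i$, then applying $g=g_1g_2\cdots g_d$ to $S$ just applies a sequence of operators each of which fixes $S$, so $g(S)=S$. The substance of the theorem is the converse: assuming the single fixed-point equation $g(S)=S$, deduce that $S$ is $i$-nested for every $i$, equivalently (by Observation~\ref{obs:gi=inest}) that $g_i(S)=S$ for every $i$.

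For the converse, I would first record the easy part: since $g=g_1\cdots g_{d-1}g_d$ and $g_d$ is applied first, the set $g_d(S)$ is always $d$-nested; as $g(S)=S$, each subsequent operator $g_{d-1},\ldots,g_1$ is applied, so we need to argue that applying $g_i$ for $i<d$ does not destroy $d$-nestedness, and more generally that the operators, applied in the fixed order $g_1\cdots g_d$, interact well. The key structural lemma I would aim for is a \emph{monotonicity/commutation-type statement}: applying $g_i$ to an already $j$-nested set (for $i\neq j$) keeps it $j$-nested. Concretely, fix $i\neq j$ and suppose $S$ is $j$-nested; I want to show $g_i(S)$ is still $j$-nested. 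This is the crux. The idea is that $g_i$ acts independently on each fiber $\pi_{\hat\imath}^{-1}(\tilde y)$, sorting that fiber's points by $i$-coordinate and pushing them down to positions $1,2,3,\ldots$ along axis $i$. Because $S$ is $j$-nested, the "column counts" $c(\tilde y)=|S\cap\pi_{\hat\imath}^{-1}(\tilde y)|$ behave monotonically as the $j$-coordinate of $\tilde y$ increases (the slice $S\cap\pi_j^{-1}(m+1)$ projects into $S\cap\pi_j^{-1}(m)$ under $\pi_{\hat\jmath}$, hence has at least as many points "above" it in the $j$-direction is wrong—rather, each $i$-column lying in slice $j=m+1$ is contained, after projection forgetting coordinate $j$, in the corresponding $i$-column of slice $j=m$); so after $g_i$ flattens each column to a prefix $[c(\tilde y)]$ along axis $i$, the resulting set is still $j$-nested because a prefix of a longer sequence contains a prefix of a shorter one. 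I would make this precise by writing, for fixed values of all coordinates except $i$ and $j$, a two-dimensional picture and checking the containment of projections slice-by-slice.

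Granting this lemma, the theorem follows by a short induction. Since $g(S)=S$, write $S=g(S)=g_1\bigl(g_2\cdots g_d(S)\bigr)$. The innermost set $g_d(S)$ is $d$-nested; applying $g_{d-1}$ keeps it $d$-nested (by the lemma) and makes it $(d-1)$-nested; inductively, $g_i g_{i+1}\cdots g_d(S)$ is $k$-nested for all $k\in\{i,i+1,\ldots,d\}$. Hence $g(S)=g_1\cdots g_d(S)$ is $k$-nested for all $k\in[d]$. But $g(S)=S$, so $S$ itself is $k$-nested for all $k$, i.e. $g_k(S)=S$ for all $k$ by Observation~\ref{obs:gi=inest}. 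By the symmetry remark (permuting coordinates commutes with the property of being fully nested), the choice of ordering in $g$ is irrelevant, so the conclusion is genuinely "$S$ is $i$-nested for each $i$."

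The main obstacle is precisely the commutation-type lemma: that $g_i$ preserves $j$-nestedness. The naive worry is that sorting each $i$-column independently could misalign the slices perpendicular to axis $j$; one must verify that $j$-nestedness of the input forces the column-length function $c(\tilde y)$ to be monotone along axis $j$ in exactly the way needed so that flattening to prefixes preserves the nested chain of projections. I expect the proof of this lemma to be a careful but elementary fiber-by-fiber argument, reducing to the two-dimensional case (coordinates $i$ and $j$ only, with all other coordinates frozen), where "$j$-nested" means the $i$-column heights are weakly decreasing in $j$ and "$g_i$ flattens each column downward"—and there the claim is visually clear. The remark that the operators do not commute in general is a reminder that one cannot be cavalier here; the lemma is a one-sided statement (applying $g_i$ to a $j$-nested set keeps it $j$-nested) and that one-sidedness, combined with the fixed order of composition in $g$, is what makes the induction go through.
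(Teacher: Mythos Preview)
Your proposal is correct and follows essentially the same route as the paper. The paper recasts the argument via the indicator function $\mathbf{1}_S$ (so that $g_i$ becomes ``sort in descending order along coordinate $i$'' and $i$-nestedness becomes ``descending in coordinate $i$''); your key lemma that $g_i$ preserves $j$-nestedness is exactly the paper's Corollary~\ref{cor:ind-sort}, and your two-dimensional reduction is precisely Claim~\ref{clm:2-chains} specialized to $0$--$1$ strings.
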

Clearly, if $S$ is $i$-nested for each $i$, then
$g(S) = S$. To verify the other implication, we 
need a couple of lemmas.
\begin{claim}
\label{clm:2-chains}
Let $a_1,\ldots,a_n$ and $b_1,\ldots,b_n$ 
be two strings of real numbers with $a_i\geq b_i$ for each $i$. If 
$a_1'\geq a_2'\geq\cdots\geq a_n'$ and
$b_1'\geq b_2'\geq\cdots\geq b_n'$ are
sortings of these strings in descending order,
then for each $i=1,\ldots,n$ we have $a_i'\geq b_i'$.
\end{claim}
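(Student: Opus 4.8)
The plan is to prove Claim~\ref{clm:2-chains} by establishing, for each fixed index $k \in \{1,\dots,n\}$, that $b_k' \le a_k'$ via a counting argument about how many of the entries are ``large.'' Fix $k$ and set $t = a_k'$. Since $a_1' \ge a_2' \ge \cdots \ge a_n'$ is the descending sort of the $a_i$, we have $a_i' \ge t$ for all $i \le k$, so the multiset $\{a_1,\dots,a_n\}$ contains at least $k$ entries that are $\ge t$; say the index set $J = \{i : a_i \ge t\}$ has $|J| \ge k$. For every $i \in J$ we have $b_i \le a_i$, but that inequality alone does not force $b_i \ge t$, so a naive approach fails — this is exactly the subtle point. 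The correct move is to go the other direction: I claim $b_k' \le t$. Suppose for contradiction that $b_k' > t$. Then, by the sorting of the $b$'s, all of $b_1', \dots, b_k'$ exceed $t$, so the set $K = \{i : b_i > t\}$ has $|K| \ge k$. For each $i \in K$ we have $a_i \ge b_i > t$, hence $i$ also lies in $\{i : a_i > t\}$; therefore $\{i : a_i > t\} \supseteq K$ has at least $k$ elements. But then at least $k$ of the sorted values $a_1', \dots, a_n'$ are strictly greater than $t = a_k'$, which is impossible since $a_k'$ is the $k$-th largest and only $a_1', \dots, a_{k-1}'$ — that is, $k-1$ values — can exceed it. This contradiction gives $b_k' \le t = a_k'$.

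Carrying this out in detail, the key steps in order are: (1) fix $k$ and let $t = a_k'$; (2) assume $b_k' > t$ and deduce that the set $K = \{i \in [n] : b_i > t\}$ satisfies $|K| \ge k$, using that $b_1' \ge \cdots \ge b_k' = b_k' > t$ and that sorting preserves the multiset; (3) use the hypothesis $a_i \ge b_i$ to conclude $K \subseteq \{i \in [n] : a_i > t\}$, so the latter set also has size $\ge k$; (4) translate back: the descending sort $a_1' \ge \cdots \ge a_n'$ would then have $a_k' > t$, contradicting $t = a_k'$. Hence $b_k' \le a_k'$ for every $k$.

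The main obstacle is precisely recognizing that one must argue by contradiction about the count of entries exceeding a threshold, rather than trying to track which individual entry $a_i$ moves to position $k$ under sorting — the permutations realizing the two sorts are generally different, so there is no termwise correspondence to exploit directly. Once the threshold-counting reformulation is in hand, the argument is short and elementary, requiring only that sorting a finite string rearranges its entries (preserving the multiset) and that exactly $k-1$ sorted entries can strictly exceed the $k$-th one.
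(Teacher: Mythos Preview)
Your proof is correct and rests on the same counting principle as the paper's: at most $k-1$ of the $b_j$ can strictly exceed $a_k'$, so $b_k'\le a_k'$. The paper's version streamlines this by first assuming without loss of generality that the $a_i$ are already sorted, then observing directly that $a_i\ge b_i,b_{i+1},\ldots,b_n$, which forces $b_i'\le a_i$; your contradiction argument and the paper's direct argument are two phrasings of the same threshold count.
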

\begin{proof}
We may assume that the $a_i$s are already
ordered $a_1\geq a_2\geq\cdots\geq a_n$.
By assumption we then have $a_i\geq b_i, a_{i+1},\ldots, a_n$
and hence $a_i\geq b_i,b_{i+1},\ldots, b_n$. Since the $b_i$'s 
are sorted descendingly $b_1'\geq b_2'\geq\cdots\geq b_n'$, 
we have in particular $a_i\geq b_i',b_{i+1}',\ldots, b_n'$.
\end{proof}
A direct consequence of Claim~\ref{clm:2-chains} is
the following:
\begin{corollary}
\label{cor:ind-sort}
Let $d\geq 1$ and 
$a : [N_1]\times\cdots\times[N_d] \rightarrow \reals$ a function.
Let $k<d$ and assume the values $a(\tilde{x})$ are in a descending order
w.r.t.~the first $k$ coordinates, i.e.
\begin{eqnarray*}
a(x_1,\ldots,x_{\ell-1},1,x_{\ell+1},\ldots,x_d)& \geq &  
   a(x_1,\ldots,x_{\ell-1},2,x_{\ell+1},\ldots,x_d) \\
 &      & \vdots\\ 
 & \geq & a(x_1,\ldots,x_{\ell-1},N_{\ell},x_{\ell+1},\ldots,x_d),
\end{eqnarray*}
for each $\ell\in\{1,\ldots,k\}$. If now
$a' : [N_1]\times\cdots\times[N_d] \rightarrow \reals$ is the
function obtained from $a$ by sorting the $N_1\cdots N_kN_{k+2}\cdots N_d$
strings $\left(a(\tilde{x}) : x_{k+1}\in\{1,\ldots,N_{k+1}\}\right)$  
w.r.t.~the $(k+1)$-th coordinate, for each fixed $\pi_{\widehat{k+1}}(\tilde{x})$,
then $a'(\tilde{x})$ are in a descending order w.r.t.~the first
$k+1$ coordinates.
\end{corollary}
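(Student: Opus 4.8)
The plan is to reduce everything to a clean two-variable statement and then invoke Claim~\ref{clm:2-chains} repeatedly. First I would fix all coordinates except $x_k$ and $x_{k+1}$ and show it suffices to treat the case $d=2$, $k=1$: indeed, the sorting in the $(k+1)$-th coordinate is carried out independently within each ``column'' (each fixed value of $\pi_{\widehat{k+1}}(\tilde{x})$), so the desired descending order along the $(k+1)$-th axis is automatic by construction of $a'$, and the only thing that could fail is monotonicity along the first $k$ axes after sorting. Moreover monotonicity along a fixed axis $\ell\le k$ only involves, for each fixed value of the remaining $d-2$ coordinates, the two strings indexed by the two relevant values of $x_\ell$ together with the sorting variable $x_{k+1}$. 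So the whole statement follows once we know: if $(a_1,\dots,a_n)$ and $(c_1,\dots,c_n)$ satisfy $a_j\ge c_j$ for every $j$, and we sort each in descending order to get $(a_j')$ and $(c_j')$, then $a_j'\ge c_j'$ for every $j$ — which is exactly Claim~\ref{clm:2-chains}.

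The key steps, in order, would be: (1) Observe that $a'$ is in descending order along the $(k+1)$-th axis by definition of the sorting operation, so this part of the conclusion is free. (2) Fix $\ell\in\{1,\dots,k\}$ and fix all coordinates other than $x_\ell$ and $x_{k+1}$; let $N=N_{k+1}$. For two values $p<q$ of $x_\ell$, the hypothesis gives $a(\dots,p,\dots,t,\dots)\ge a(\dots,q,\dots,t,\dots)$ for every $t\in[N]$, i.e. the string at $x_\ell=p$ dominates the string at $x_\ell=q$ entry-by-entry. (3) Apply Claim~\ref{clm:2-chains} to these two length-$N$ strings: after sorting both descendingly in $t$, the sorted $p$-string still dominates the sorted $q$-string entry-by-entry, which says precisely that $a'(\dots,p,\dots,t,\dots)\ge a'(\dots,q,\dots,t,\dots)$ for every $t$. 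Since $p<q$ were arbitrary, $a'$ is descending along the $\ell$-th axis; since $\ell\le k$ was arbitrary, $a'$ is descending w.r.t.\ the first $k$ coordinates. (4) Combining (1) and (3), $a'$ is descending w.r.t.\ the first $k+1$ coordinates, as claimed.

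The one point that deserves care — and is really the crux — is step (2)–(3): why does sorting along the $(k+1)$-th axis not disturb the inequalities along another axis $\ell\le k$? The answer is that the two strings being compared along the $\ell$-axis are sorted by the \emph{same} rule (descending in $x_{k+1}$), independently of each other, and they started out in a dominance relation; Claim~\ref{clm:2-chains} says dominance is preserved under such simultaneous independent descending sorts. There is no subtlety about the sorting permutations being different for the two strings — Claim~\ref{clm:2-chains} explicitly allows that. Everything else is bookkeeping: peeling off the frozen coordinates and noting that the $(k+1)$-axis order is built into the construction. I do not anticipate any genuine obstacle; the proof is essentially ``Claim~\ref{clm:2-chains}, one axis at a time.''
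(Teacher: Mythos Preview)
Your proposal is correct and matches the paper's approach: the paper states the corollary as a direct consequence of Claim~\ref{clm:2-chains}, and your argument spells out exactly how that consequence goes, namely by freezing all coordinates except $x_\ell$ and $x_{k+1}$ and applying Claim~\ref{clm:2-chains} to the resulting pair of strings. There is nothing to add.
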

By induction we then have by Corollary~\ref{cor:ind-sort}
the following.
\begin{corollary}
\label{cor:gen-sort}
Let $d\geq 1$ and 
$a : [N_1]\times\cdots\times[N_d] \rightarrow \reals$ a function.
Let $a' : [N_1]\times\cdots\times[N_d] \rightarrow \reals$ be the
function obtain from $a$ by first sorting the $N_1\cdots N_{d-1}$
strings $\left(a(\tilde{x}) : x_d\in\{1,\ldots,N_d\}\right)$ w.r.t.~the $d$-th 
coordinate in a descending order, then w.r.t.~the $(d-1)$-th coordinate etc., 
finally sorting w.r.t.~the first coordinate. 
In this case the values $a'(\tilde{x})$ are in a descending order
w.r.t.~each of the $d$ coordinates.
\end{corollary}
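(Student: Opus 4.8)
The plan is to prove Corollary~\ref{cor:gen-sort} by a short induction on the number of sorting steps, invoking Corollary~\ref{cor:ind-sort} once per step. First I would record the (essentially trivial) observation that Corollary~\ref{cor:ind-sort}, although phrased in terms of ``the first $k$ coordinates'' and ``the $(k+1)$-th coordinate'', holds verbatim for any $k$-element subset $J\subseteq[d]$ of coordinates and any single coordinate $j\in[d]\setminus J$: this is Corollary~\ref{cor:ind-sort} applied after relabelling the axes by a coordinate permutation, which is legitimate since permuting coordinates is a symmetry of the whole setup (as already noted for full nestedness). Spelled out, the subset version reads: if $a$ is in descending order w.r.t.~each coordinate in $J$, and for each fixed $\pi_{\hat\jmath}(\tilde x)$ we sort the strings $(a(\tilde x):x_j\in\{1,\ldots,N_j\})$ into descending order w.r.t.~the $j$-th coordinate, then the resulting function is in descending order w.r.t.~each coordinate in $J\cup\{j\}$. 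The only thing actually being proved here is that this last sort does not destroy the descending order already present along the coordinates of $J$, and that is exactly what Claim~\ref{clm:2-chains} delivers, applied to pairs of ``columns'' of $a$ that were comparable entry-by-entry before the sort.

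Next I would set up the induction. Write $a^{(0)}=a$, and for $m=1,\ldots,d$ let $a^{(m)}$ be obtained from $a^{(m-1)}$ by sorting, for each fixed value of the remaining coordinates, the strings along the $(d-m+1)$-th coordinate into descending order; thus $a^{(d)}=a'$ is the function in the statement. Let $S_m:=\{d-m+1,d-m+2,\ldots,d\}$, so that $S_0=\emptyset$ and $S_d=[d]$. I claim that for every $m$ the function $a^{(m)}$ is in descending order w.r.t.~each coordinate in $S_m$. The base case $m=0$ is vacuous (and $m=1$ is immediate: sorting each string along coordinate $d$ into descending order trivially makes $a^{(1)}$ descending in coordinate $d$). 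For the inductive step, assume $a^{(m)}$ is descending w.r.t.~each coordinate of $S_m$; the $(m+1)$-th step sorts along coordinate $d-m$, which lies in $[d]\setminus S_m$, so the subset version of Corollary~\ref{cor:ind-sort} with $J=S_m$ and $j=d-m$ shows that $a^{(m+1)}$ is descending w.r.t.~each coordinate of $S_m\cup\{d-m\}=S_{m+1}$, completing the induction.

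Taking $m=d$ then gives that $a'=a^{(d)}$ is in descending order w.r.t.~each coordinate in $[d]$, which is the assertion. I do not anticipate any genuine obstacle: the argument is just bookkeeping around the single nontrivial fact (Claim~\ref{clm:2-chains}) that sorting along a fresh axis preserves monotonicity along the axes already handled. The one point to be careful about is the order in which the sorts are performed --- from the last coordinate down to the first --- and checking that at each step the coordinate being sorted is not already among the sorted ones, so that the hypotheses of Corollary~\ref{cor:ind-sort} are genuinely met; both facts are immediate from the definition of $S_m$, since $d-m<d-m+1$.
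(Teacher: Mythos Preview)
Your proof is correct and follows exactly the approach the paper indicates: the paper simply states that Corollary~\ref{cor:gen-sort} follows ``by induction'' from Corollary~\ref{cor:ind-sort}, and your argument is a careful unpacking of that one-line remark, including the observation that Corollary~\ref{cor:ind-sort} applies to any $k$-subset of coordinates rather than just the first $k$. There is nothing to correct or add.
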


Consider a set $S\subseteq {\nats}^d$ of order $n$. 
As a finite set, there are $N_1,\ldots, N_d\in\nats$ such that
$S\in [N_1]\times\cdots\times[N_d]$ and we have the indicator
function  
$\mathbf{1}_S : [N_1]\times\cdots\times[N_d] \rightarrow \{0,1\}$
of $S$, so $\mathbf{1}_S(\tilde{x}) = 1$ iff $\tilde{x}\in S$.
Note that, by definition, the set $g_i(S)$ is the set
whose indicator function $\mathbf{1}_S'$ 
is obtained by sorting the strings $\mathbf{1}_S(\tilde{x})$ 
w.r.t.~the $i$-th coordinate. Hence, gravity along $i$ corresponds
to sorting the indicator function w.r.t.~the $i$-coordinate.
By Corollary~\ref{cor:gen-sort} we have that
\begin{equation}
\label{eqn:ind-nest}
g_ig_1\cdots g_d  = g_1\cdots g_d,
\end{equation}
for any $i\in \{1,\ldots,d\}$. If $g(S) = S$, then 
by (\ref{eqn:ind-nest}) $g_i(S) = S$ for each $i$, and
hence we have Theorem~\ref{thm:fully-nested}.

Our final objective in this section is to show that 
$E_d(S)$ is at maximum when $S$ is fully nested.
\begin{lemma}
\label{lmm:i-nested}
If $S\subseteq {\nats}^d$ is a finite set,
then $E_d(S)\leq E_d(g_i(S))$.
\end{lemma}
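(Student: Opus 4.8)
The plan is to show that applying the gravity operator $g_i$ never decreases the number of induced edges by accounting for edges direction by direction. Write $n = |S|$ and fix $i \in [d]$. The key observation is that $g_i$ only rearranges points within each line $S \cap \pi_{\hat\imath}^{-1}(\tilde y)$ parallel to the $i$-th axis, so it is natural to classify the edges of $G[S]$ into two types: those parallel to the $i$-th axis (call these \emph{axial} edges), and those perpendicular to it (call these \emph{transverse} edges). For the axial edges, I would first note that each such edge joins two points in a common fibre $S \cap \pi_{\hat\imath}^{-1}(\tilde y)$, and the number of axial edges in that fibre is one less than the number of maximal runs of consecutive $i$-coordinates; after $g_i$ acts, the fibre becomes a single contiguous path $[k]$ with $k-1 = k - 1$ edges, which is the maximum possible. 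Hence $g_i$ does not decrease the axial edge count in any fibre, so it does not decrease the total axial count.

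The harder part is the transverse edges. A transverse edge joins $\tilde x$ and $\tilde x'$ that agree in all coordinates except some $j \neq i$, where $x_j' = x_j + 1$; equivalently it is a pair of points lying in two ``adjacent parallel lines'' $\pi_{\hat\imath}^{-1}(\tilde y)$ and $\pi_{\hat\imath}^{-1}(\tilde y')$ (with $\tilde y, \tilde y' \in {\nats}^{d-1}$ adjacent in ${\ints}^{d-1}$) and having the same $i$-coordinate. So fix such an adjacent pair $\tilde y, \tilde y'$, let $a = |S \cap \pi_{\hat\imath}^{-1}(\tilde y)|$ and $b = |S \cap \pi_{\hat\imath}^{-1}(\tilde y')|$, and let $T = \pi_i(S \cap \pi_{\hat\imath}^{-1}(\tilde y))$ and $T' = \pi_i(S \cap \pi_{\hat\imath}^{-1}(\tilde y'))$ be the corresponding sets of $i$-coordinates, with $|T| = a$, $|T'| = b$. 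The number of transverse edges between these two lines in $G[S]$ is exactly $|T \cap T'|$, whereas after applying $g_i$ the two fibres become $[a]$ and $[b]$, contributing $|[a] \cap [b]| = \min(a,b)$ transverse edges. Since $|T \cap T'| \le \min(|T|, |T'|) = \min(a,b)$, the transverse edge count between every adjacent pair of parallel lines can only increase (or stay the same) under $g_i$.

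Combining the two parts: $E_d(S)$ decomposes as the sum over all fibres of the axial counts plus the sum over all adjacent pairs of fibres of the transverse counts, and every summand is non-decreasing under $g_i$; therefore $E_d(S) \le E_d(g_i(S))$. One small bookkeeping point I would make explicit is that $g_i$ induces a bijection between the fibres of $S$ and the fibres of $g_i(S)$ that preserves the projection $\pi_{\hat\imath}$ (a fibre over $\tilde y$ maps to the fibre over $\tilde y$), so ``adjacent pairs of fibres'' correspond correctly before and after, and no fibre is created or destroyed; this is immediate from the definition of $g_i$, which acts fibrewise. I do not expect a serious obstacle here — the only thing to be careful about is the clean separation of edges into axial and transverse classes and the observation that transverse edges between two lines are counted by the intersection of the two coordinate sets, which is bounded by the smaller cardinality, exactly the value achieved after gravity compresses each fibre to an initial segment.
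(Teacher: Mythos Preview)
Your proof is correct and follows the same decomposition as the paper's: split the edges of $G[S]$ into those parallel to the $i$-th axis and those not, then argue fibrewise that gravity maximizes each axial count (a fibre of size $k$ carries at most $k-1$ axial edges, achieved exactly after $g_i$) and, for each adjacent pair of fibres, that the transverse count $|T\cap T'|\le\min(a,b)$ is again achieved after $g_i$. One minor slip to fix: the number of axial edges in a fibre is $k$ \emph{minus} the number of maximal runs, not ``one less than the number of maximal runs''; your conclusion is unaffected since only the upper bound $\le k-1$ is actually used.
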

\begin{proof}
The edges of $G[S]\subseteq {\nats}^d$ are either parallel
to the $i$-th axis, or not. Each edge parallel to the $i$-th
axis is mapped by $\pi_{\hat{\imath}}$ to a single point 
$\tilde{x}\in {\nats}^{d-1}$.
Each edge that is not parallel to $i$-th axis is mapped
by $\pi_{\hat{\imath}}$ to an edge 
$\{\tilde{x},\tilde{y}\}\in\binom{{\nats}^{d-1}}{2}$.

For $\tilde{x}\in {\nats}^{d-1}$ there are at most 
$|S\cap \pi_{\hat{\imath}}^{-1}(\tilde{x})|-1$ edges of $G[S]$ parallel
to $i$-th axis that
are mapped to $\tilde{x}$ under
$\pi_{\hat{\imath}}$, and there are precisely 
$|g_i(S)\cap \pi_{\hat{\imath}}^{-1}(\tilde{x})|-1 =
|S\cap \pi_{\hat{\imath}}^{-1}(\tilde{x})|-1$ edges of $G[g_i(S)]$ parallel
to $i$-th axis that are mapped to $\tilde{x}$, since they
form a connected path. 

Each edge $\{\tilde{x},\tilde{y}\}\in\binom{{\nats}^{d-1}}{2}$
yields a matching between points/vertices of $G[S]$ 
in $S\cap \pi_{\hat{\imath}}^{-1}(\tilde{x})$ and 
$S\cap \pi_{\hat{\imath}}^{-1}(\tilde{y})$.
In particular, if $k$ edges in $G[S]$ are mapped to $\{\tilde{x},\tilde{y}\}$, 
then both $S\cap \pi_{\hat{\imath}}^{-1}(\tilde{x})$
and $S\cap \pi_{\hat{\imath}}^{-1}(\tilde{y})$ have cardinality of $k$ or greater.
Therefore the number of edges in $G[g_i(S)]$ is given by
\[
\min(|g_i(S)\cap \pi_{\hat{\imath}}^{-1}(\tilde{x})|,
     |g_i(S)\cap \pi_{\hat{\imath}}^{-1}(\tilde{y})|) = 
\min(|S\cap \pi_{\hat{\imath}}^{-1}(\tilde{x})|,
     |S\cap \pi_{\hat{\imath}}^{-1}(\tilde{y})|) \geq k,
\]
and so $G[g_i(S)]$ has at least $k$ edges mapped to
$\{\tilde{x},\tilde{y}\}$. 
\end{proof}
\begin{corollary}
\label{cor:max-fully-nested}
Among all finite sets $S\subseteq {\nats}^d$ then
$E_d(S)$ is at maximum when $S$ is fully nested.
\end{corollary}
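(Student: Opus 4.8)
The plan is to deduce Corollary~\ref{cor:max-fully-nested} from Lemma~\ref{lmm:i-nested} together with Theorem~\ref{thm:fully-nested} by iterating the gravity operators until the set stabilizes. First I would fix a finite set $S\subseteq {\nats}^d$ that achieves the maximum of $E_d(S)$ among all finite sets of order $|S| = n$ contained in ${\nats}^d$; such a maximizer exists since, up to translation, there are only finitely many sets of order $n$ inside any fixed box, and $E_d$ is translation-invariant. Now apply the total gravity $g = g_1 g_2 \cdots g_d$ repeatedly, forming the sequence $S, g(S), g^2(S), \ldots$. Each application of a single $g_i$ does not decrease the edge count by Lemma~\ref{lmm:i-nested} (applied coordinate by coordinate, since $g$ is a composition of the $g_i$'s), so $E_d(g^k(S)) \geq E_d(S)$ for all $k$; but $S$ is a maximizer, hence $E_d(g^k(S)) = E_d(S)$ for every $k$, and in particular $g(S)$ is again a maximizer.

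The key remaining point is that this iteration actually terminates at a fixed point of $g$. I would argue this by exhibiting a nonnegative integer-valued potential that strictly decreases under $g$ unless $g(S) = S$. A natural choice is $\Phi(S) = \sum_{\tilde{x}\in S} \big(\sum_{i=1}^d x_i\big)$, the sum of all coordinates of all points of $S$: each gravity operator $g_i$ can only move points toward the hyperplane $x_i = 1$, i.e. it never increases any coordinate of any point (after the relabeling, the $h$-th point in a fibre gets $i$-coordinate $h$, which is at most its original $i$-coordinate), so $\Phi(g_i(S)) \leq \Phi(S)$ and hence $\Phi(g(S)) \leq \Phi(S)$, with equality only if every $g_i$ fixes $S$, which by Observation~\ref{obs:gi=inest} means $S$ is already fully nested. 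Since $\Phi$ takes values in $\nats\cup\{0\}$ and $S$ stays inside a fixed box throughout (coordinates never grow), the sequence $\Phi(S) \geq \Phi(g(S)) \geq \Phi(g^2(S)) \geq \cdots$ of nonnegative integers must stabilize, so there is some $k$ with $\Phi(g^{k+1}(S)) = \Phi(g^k(S))$, forcing $g(g^k(S)) = g^k(S)$, i.e. $g^k(S)$ is a fixed point of the total gravity.

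Putting these together: the set $T := g^k(S)$ satisfies $g(T) = T$, so by Theorem~\ref{thm:fully-nested} it is $i$-nested for each $i$, i.e. fully nested; and $E_d(T) = E_d(S) = E_d(n)$ by the equality chain above, so $T$ is a fully nested set attaining the maximum. Since $|T| = |S| = n$ (gravity operators are bijections on each fibre and hence preserve cardinality), this shows that for every $n$ the maximum $E_d(n)$ is realized by some fully nested set, which is exactly the assertion of Corollary~\ref{cor:max-fully-nested}.

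The main obstacle — and the only genuinely non-routine ingredient — is the termination of the iteration, i.e. finding the right monovariant; everything else is bookkeeping that rides on Lemma~\ref{lmm:i-nested} and Theorem~\ref{thm:fully-nested}. The coordinate-sum potential $\Phi$ works cleanly because each $g_i$ is ``order-preserving downward'' on every axis-parallel fibre, but one should state carefully why $g_i$ never increases a point's $i$-coordinate: within a fibre of size $m$ the points occupy at least $m$ distinct values in $\nats$, so the $h$-smallest among them has $i$-coordinate at least $h$, which is precisely the value it is reassigned. One caveat to note in the write-up is that $g$ need not be idempotent in a single step (the $g_i$'s do not commute), which is exactly why an iteration, rather than one application, is needed — but the monovariant guarantees it still halts.
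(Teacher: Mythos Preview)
Your argument is correct, but it takes a longer route than the paper intends. The paper does not give an explicit proof of the corollary because it follows in one step from what has already been established: equation~(\ref{eqn:ind-nest}) says $g_i g = g$ for every $i$, so for \emph{any} finite $S$ the set $g(S)$ already satisfies $g_i(g(S)) = g(S)$ for all $i$, and hence by Observation~\ref{obs:gi=inest} is fully nested after a \emph{single} application of the total gravity. Then Lemma~\ref{lmm:i-nested}, applied once for each factor $g_i$ in the composition $g = g_1\cdots g_d$, gives $E_d(S)\le E_d(g(S))$, and we are done.

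In other words, your caveat that ``$g$ need not be idempotent in a single step'' is precisely what the paper's sorting argument (Corollary~\ref{cor:gen-sort}, leading to (\ref{eqn:ind-nest})) rules out: $g$ \emph{is} idempotent, so no iteration and no monovariant are needed. Your potential-function argument is a valid and self-contained alternative --- it does not rely on the sorting lemmas at all, only on Lemma~\ref{lmm:i-nested} and the easy observation that $g_i$ never increases a coordinate --- but within the paper's development it duplicates work already done. If you keep your version, you should drop the remark about non-idempotence, since it contradicts (\ref{eqn:ind-nest}); if you adopt the paper's one-line route, the whole monovariant paragraph can be deleted.
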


\section{Tight asymptotic bounds}
\label{sec:asymp}

The objective in this section is to derive an
asymptotically tight upper bound for $E_d(n)$, which
by Corollary~\ref{cor:max-fully-nested}, equals 
$E_d(S)$ for some finite fully nested set $S\subseteq {\nats}^d$ 
of order $n$.

Let $S\subseteq {\nats}^d$ be a fully nested set of order $n$. 
The edges of $G[S]$ are partitioned into $i$ parts, 
the $i$-th part consisting of all edges parallel to the $i$-th axis. 
As noted in the proof of Lemma~\ref{lmm:i-nested}, each
point $\tilde{x}\in \pi_{\hat{\imath}}(S)$ corresponds
to a connected path of $G[S]$ since $S$ is fully nested. Also, since there
are $n$ vertices of $G[S]$, and 
exactly $n_{\hat{\imath}} =n_{\hat{\imath}}(S) := |\pi_{\hat{\imath}}(S)|$ 
disjoint paths of $G[S]$ 
parallel to $i$-axis, the number of edges parallel to $i$-th axis
is $n - n_{\hat{\imath}}$. From this we have the following exact 
count on the number of edges of $G[S]$.
\begin{observation}
\label{obs:exact-edge-count}
If $S\subseteq {\nats}^d$ is fully nested set with $|S| = n$,
then $E_d(S) = dn - (n_{\hat{1}} + \cdots + n_{\hat{d}})$.
\end{observation}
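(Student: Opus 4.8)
The plan is to partition the edge set of $G[S]$ by direction and count each part separately. Every edge of $G[S]$ joins two points of $S$ at Manhattan distance $1$, hence two points differing in exactly one coordinate; call such an edge an \emph{$i$-edge} if its two endpoints differ in the $i$-th coordinate. This gives a partition $E(G[S]) = E_1\sqcup\cdots\sqcup E_d$ of the edge set into $d$ (possibly empty) classes, so it suffices to show that $|E_i| = n - n_{\hat\imath}$ for each $i\in[d]$; summing these $d$ identities then yields $E_d(S) = |E(G[S])| = \sum_{i=1}^d (n - n_{\hat\imath}) = dn - (n_{\hat 1}+\cdots+n_{\hat d})$.

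Fix $i\in[d]$. The projection $\pi_{\hat\imath}$ collapses every $i$-edge to a single point of ${\nats}^{d-1}$, and it partitions $S$ into its fibers $S\cap\pi_{\hat\imath}^{-1}(\tilde y)$ over $\tilde y\in\pi_{\hat\imath}(S)$; any $i$-edge mapping to $\tilde y$ has both endpoints inside that fiber. Here is where full nestedness enters: since $S$ is in particular $i$-nested, Observation~\ref{obs:gi=inest} gives $g_i(S) = S$, and by the characterization recorded just before that observation this means $\pi_i\bigl(S\cap\pi_{\hat\imath}^{-1}(\tilde y)\bigr) = [k_{\tilde y}]$ with $k_{\tilde y} := |S\cap\pi_{\hat\imath}^{-1}(\tilde y)|$. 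So each fiber sits as a contiguous segment $1,2,\ldots,k_{\tilde y}$ along the $i$-th axis, hence induces a path with exactly $k_{\tilde y} - 1$ edges, all of them $i$-edges mapped to $\tilde y$, and these account for every $i$-edge in that fiber.

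Summing over the fibers and using that they partition $S$,
\[
|E_i| \;=\; \sum_{\tilde y\in\pi_{\hat\imath}(S)} (k_{\tilde y} - 1)
\;=\; \Bigl(\sum_{\tilde y\in\pi_{\hat\imath}(S)} k_{\tilde y}\Bigr) - |\pi_{\hat\imath}(S)|
\;=\; n - n_{\hat\imath},
\]
which, combined with the decomposition from the first paragraph, proves the observation.

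There is essentially no obstacle here beyond bookkeeping; the one point that deserves care is the passage from ``$S$ is $i$-nested'' to ``every $i$-fiber of $S$ is the full segment $[k]$, hence a connected path on $k$ vertices.'' That is precisely what Observation~\ref{obs:gi=inest} together with the preceding remark on $g_i$ supplies, and it is exactly what upgrades each fiber from an arbitrary $k$-point set (which would contribute only at most $k-1$ $i$-edges, the inequality exploited in Lemma~\ref{lmm:i-nested}) to a path contributing exactly $k-1$, turning the bound into an equality.
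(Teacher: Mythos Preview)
Your proof is correct and follows essentially the same approach as the paper: partition the edges of $G[S]$ by direction, use $i$-nestedness (via Observation~\ref{obs:gi=inest}) to see that each $\pi_{\hat\imath}$-fiber is a contiguous path, and sum $k_{\tilde y}-1$ over the fibers to get $n-n_{\hat\imath}$ edges in direction $i$. The paper gives exactly this argument in the paragraph immediately preceding the observation, only more tersely.
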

From the above Observation~\ref{obs:exact-edge-count} we see
that if we can compute the exact minimum value of $\sum_i n_{\hat{\imath}}$
for all fully nested sets   $S\subseteq {\nats}^d$ of order $n$,
then we can determine $E_d(n)$ by subtracting that minimum value from $dn$.

By a theorem of Bollob\'{a}s and 
Thomason~\cite[Thm 2, p.418]{Bollobas-Thomason} we have
\begin{equation}
\label{eqn:BoxThm}
n^{d-1}\leq\prod_{i=1}^dn_{\hat{\imath}}.
\end{equation}
Note that equality holds in (\ref{eqn:BoxThm}) 
for any set $S$ of the form $S = S_1\times\cdots\times S_d$.
By Observation~\ref{obs:exact-edge-count}, the inequality of
arithmetic and  geometric mean, and (\ref{eqn:BoxThm}) we obtain
\begin{equation}
\label{eqn:S-ass-tight}
E_d(S) = dn - \left(\sum_{i=1}^d n_{\hat{\imath}}\right)
      \leq d\left(n - \sqrt[d]{\prod_{i=1}^dn_{\hat{\imath}}}\right)
      \leq d(n - \sqrt[d]{n^{d-1}})
         = dn(1 - n^{-1/d})
\end{equation}
and therefore for each $n\in\nats$ we have 
\begin{equation}
\label{eqn:n-ass-tight}
E_d(n) \leq dn(1 - n^{-1/d}).
\end{equation}
Since equality holds in the inequality of arithmetic and geometric
mean iff all the parameters are equal, we have that
equality holds in (\ref{eqn:S-ass-tight}) for a fully nested set $S$
iff $S = [m]^d$ is a $d$-dimensional hypercube (or a {\em $d$-cube} for
short) with $m^d$ vertices.
Hence for $n = m^d$ we have equality in (\ref{eqn:n-ass-tight}).
For each fixed $d$ both the functions $E_d(n)$ and the upper bound
on the right of (\ref{eqn:n-ass-tight}) are clearly increasing functions
of $n$. Also, since $E_d(n)$ is always an integer we have
the following.
\begin{proposition}
\label{prp:n-ass-tight}
For all $d,n\in\nats$ we have
\begin{equation}
\label{eqn:n-int-tight}
E_d(n) \leq \lfloor dn(1 - n^{-1/d})\rfloor.
\end{equation}
This bound is asymptotically tight as $n\rightarrow\infty$
and equality holds for all $d$-th powers $n=m^d$.
\end{proposition}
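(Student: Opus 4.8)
The plan is to derive the proposition from the real-valued bound~(\ref{eqn:n-ass-tight}) together with the equality analysis following~(\ref{eqn:S-ass-tight}); essentially no new argument is required. First, since $E_d(n)$ is a nonnegative integer for all $d,n\in\nats$ while the right-hand side of~(\ref{eqn:n-ass-tight}) is merely a real number, the integrality of $E_d(n)$ strengthens~(\ref{eqn:n-ass-tight}) to~(\ref{eqn:n-int-tight}): an integer that is at most $x$ is also at most $\lfloor x\rfloor$.

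Next I would pin down equality at the $d$-th powers. Fix $n=m^d$ and take $S=[m]^d$. This set is fully nested and satisfies $n_{\hat{\imath}}(S) = m^{d-1}$ for every $i$, so Observation~\ref{obs:exact-edge-count} gives
\[
E_d(S) = dm^d - dm^{d-1} = dn(1-m^{-1}) = dn(1-n^{-1/d}).
\]
The right-hand side is an integer, hence equals $\lfloor dn(1-n^{-1/d})\rfloor$; since $E_d(n)\ge E_d(S)$, this together with~(\ref{eqn:n-int-tight}) forces equality in~(\ref{eqn:n-int-tight}) whenever $n=m^d$.

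Finally, for asymptotic tightness I would interpolate between consecutive $d$-th powers. Given $n$, pick $m\in\nats$ with $m^d\le n<(m+1)^d$. Using that $E_d$ is nondecreasing, that $x\mapsto dx(1-x^{-1/d})$ is increasing, and the equality just established,
\[
d(m^d-m^{d-1}) = E_d(m^d) \le E_d(n) \le dn(1-n^{-1/d}) < d((m+1)^d-(m+1)^{d-1}).
\]
Hence $\dfrac{E_d(n)}{dn(1-n^{-1/d})}$ lies between $\dfrac{m^d-m^{d-1}}{(m+1)^d-(m+1)^{d-1}}$ and $1$; since $n\to\infty$ forces $m\to\infty$ and the lower ratio tends to $1$ (expand $(m+1)^d$ and $(m+1)^{d-1}$ to leading order in $m$), the squeeze theorem yields the claimed asymptotic tightness. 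The only point needing a little care is this last interpolation, but the monotonicity of both $E_d(n)$ and of the bounding function makes the squeeze automatic, so no real obstacle remains in this proposition.
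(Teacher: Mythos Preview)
Your proposal is correct and follows essentially the same approach as the paper: the paper simply notes that $E_d(n)$ is an integer (giving the floor), that equality in (\ref{eqn:S-ass-tight}) holds precisely for $S=[m]^d$ (giving equality at $d$-th powers), and that both $E_d(n)$ and the bound are increasing in $n$ (giving asymptotic tightness). Your write-up is actually more explicit than the paper's on the asymptotic step---the paper leaves the squeeze between consecutive $d$-th powers implicit---but the ingredients and logic are identical.
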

{\sc Remark:} Note that for $d\in\{1,2\}$ then we have
equality in (\ref{eqn:n-int-tight}) as shown in~\cite{Harary-Harborth}.

\section{A recursive inequality}
\label{sec:rec-ineq}

In this section we will derive a general recursive upper bound of $E_d(n)$,
that is tight in the sense that it can be realized in some specific
cases. By Corollary~\ref{cor:max-fully-nested} it suffices to consider
fully nested $S\subseteq {\nats}^d$ of order $n$, and by 
Proposition~\ref{prp:n-ass-tight} we may, when necessary, 
assume that $n$ is not a $d$-th power of an integer.

{\sc Convention:} Just like a $d$-cube had $2d$ sides,
a fully nested set $S\subseteq {\nats}^d$ will, in our context,
have $2d$ sides as well, namely $S\cap H$ where $H : x_i = k$,
$i\in [d]$ and $k\in \{1,|\pi_i(S)|\}$
is one of the $2d$ supporting hyperplanes of $S$.

For any fully nested $S\subseteq {\nats}^d$ not contained in a
hyperplane, and any hyperplane 
$H_k : x_d = k$, where $2\leq k\leq |\pi_d(S)|$,
we obtain a partition or {\em cut} by
\begin{eqnarray*}
S_1  & = & \{\tilde{x}\in S : \pi_d(\tilde{x}) < k\}, \\
S_2  & = & \{\tilde{x}\in S : \pi_d(\tilde{x}) \geq k\}.
\end{eqnarray*}
Assume that $|S|=n$ and that $S$ is optimal, so
$E_d(S) = E_d(n)$. Since $S$ is fully nested the number
of edges parallel to the $x_d$ axis that cut through the
hyperplane $H_k$, in the sense that one endvertex is in $S_2$
and the other is in $S_1$, is given by $n_{\hat{d}}(S_2) = |\pi_{\hat{d}}(S_2)|$.
From this we see that for our set $S$ we then have
\begin{equation}
\label{eqn:rec-exact1}
E_d(n) = E_d(S) = E_d(S_1) + E_d(S_2) + n_{\hat{d}}(S_2).
\end{equation}
{\sc Note:} There is no significance to the last coordinate $x_d$ here.
This can also be obtained by any cut perpendicular to any of the $d$
coordinate axes.

\vspace{3 mm}

If $h = |\pi_d(S)|$, then we have a partition 
$S_2 = S_{2;k}\cup\cdots\cup S_{2;h}$ where each  
$S_{2;i} = \{\tilde{x}\in S : \pi_d(\tilde{x}) = i\}$.
Since $S$ is fully nested we have 
\[
\pi_{\hat{d}}(S_{2;k})\supseteq\cdots\supseteq \pi_{\hat{d}}(S_{2;h})
\]
and $n_{\hat{d}}(S_2) = |\pi_{\hat{d}}(S_{2;k})|= n_{\hat{d}}(S_{2;k})$ 
and therefore  
\[
E_d(S_2) + n_{\hat{d}}(S_2) 
= \sum_{i=k}^h\left(n_{\hat{d}}(S_{2;i}) + E_{d-1}(\pi_{\hat{d}}(S_{2;i})\right).
\]
Since 
\[
\sum_{i=k}^hn_{\hat{d}}(S_{2;i}) = \sum_{i=k}^h |S_{2;i}| = |S_2|,
\]
we obtain   
\begin{equation}
\label{eqn:S2-part}
E_d(S_2) + n_{\hat{d}}(S_2) = |S_2| + \sum_{i=k}^hE_{d-1}(\pi_{\hat{d}}(S_{2;i})
\end{equation}
By (\ref{eqn:defE(n)}), the definition of $E_{d-1}$ as a
function $\nats\rightarrow\nats$, we clearly
have $\sum_{i=k}^hE_{d-1}(\pi_{\hat{d}}(S_{2;i}))\leq E_{d-1}(|S_2|)$
and hence from (\ref{eqn:S2-part}) we then get 
\[
E_d(S_2) + n_{\hat{d}}(S_2) \leq |S_2| + E_{d-1}(|S_2|)
\]
and hence from (\ref{eqn:rec-exact1}) we get the inequality
\begin{equation}
\label{eqn:gen-rec-ineq}
E_d(n) = E_d(S) \leq E_d(|S_1|) + E_{d-1}(|S_2|) + |S_2|. 
\end{equation}
We summarize in the following.
\begin{lemma}
\label{lmm:rec-ineq}
Let $n\in\nats$ and $S\subseteq {\nats}^d$
fully nested and optimal with $|S|=n$.
Then for any cut that partitions $S$ into two proper sets 
$S_1$ and $S_2$ of order $n_1$ and $n_2$ respectively, 
and so $n_1+n_2 = n$, we have 
\begin{equation}
\label{eqn:rec-ineq}
E_d(n) \leq E_d(n_1) + E_{d-1}(n_2) + n_2.
\end{equation}
\end{lemma}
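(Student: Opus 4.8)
The plan is to turn the cut computation preceding the statement into a clean, induction-ready inequality. By Corollary~\ref{cor:max-fully-nested} we may take $S\subseteq{\nats}^d$ to be fully nested, and it is given to be optimal, so $E_d(S)=E_d(n)$. A cut is, by definition, perpendicular to one of the coordinate axes; since a permutation of the coordinates preserves both full-nestedness (noted right after Observation~\ref{obs:gi=inest}) and the number of induced edges, I would first relabel so that the cut is perpendicular to the $d$-th axis, i.e.\ $S_1=\{\tilde{x}\in S:\pi_d(\tilde{x})<k\}$ and $S_2=\{\tilde{x}\in S:\pi_d(\tilde{x})\geq k\}$ for some $k$ with $2\leq k\leq|\pi_d(S)|$. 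That the cut is \emph{proper} is exactly what guarantees $|\pi_d(S)|\geq 2$, so that both $S_1,S_2$ are nonempty; this is the only place that hypothesis is used.

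The first step is the exact identity~(\ref{eqn:rec-exact1}),
\[
E_d(n)=E_d(S)=E_d(S_1)+E_d(S_2)+n_{\hat{d}}(S_2).
\]
To obtain it I would split $E(G[S])$ into the edges inside $S_1$, the edges inside $S_2$, and the edges straddling the hyperplane $x_d=k$; because $S$ is fully nested, each fibre $S\cap\pi_{\hat{d}}^{-1}(\tilde{x})$ is a single path, so there is exactly one straddling edge per point $\tilde{x}\in\pi_{\hat{d}}(S_2)$, which is the term $n_{\hat{d}}(S_2)$.

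Next I would peel apart the block $E_d(S_2)+n_{\hat{d}}(S_2)$ by slicing $S_2$ into the sections $S_{2;i}=\{\tilde{x}\in S:\pi_d(\tilde{x})=i\}$ for $i=k,\ldots,h:=|\pi_d(S)|$. Full-nestedness gives the chain $\pi_{\hat{d}}(S_{2;k})\supseteq\cdots\supseteq\pi_{\hat{d}}(S_{2;h})$, so the edges of $G[S_2]$ parallel to $x_d$ number $\sum_{i=k+1}^{h}n_{\hat{d}}(S_{2;i})$, and adding the straddling term $n_{\hat{d}}(S_2)=n_{\hat{d}}(S_{2;k})$ telescopes this to $\sum_{i=k}^{h}n_{\hat{d}}(S_{2;i})=\sum_{i=k}^{h}|S_{2;i}|=|S_2|$; the remaining edges of $G[S_2]$ lie inside the slices and contribute $\sum_{i=k}^{h}E_{d-1}(\pi_{\hat{d}}(S_{2;i}))$. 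This is exactly~(\ref{eqn:S2-part}): $E_d(S_2)+n_{\hat{d}}(S_2)=|S_2|+\sum_{i=k}^{h}E_{d-1}(\pi_{\hat{d}}(S_{2;i}))$.

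The last ingredient, and the only one beyond bookkeeping, is $\sum_{i=k}^{h}E_{d-1}(\pi_{\hat{d}}(S_{2;i}))\leq E_{d-1}(|S_2|)$. I would get this from the superadditivity of $E_{d-1}$: each $\pi_{\hat{d}}(S_{2;i})$ has $n_{\hat{d}}(S_{2;i})$ points, so $E_{d-1}(\pi_{\hat{d}}(S_{2;i}))\leq E_{d-1}(n_{\hat{d}}(S_{2;i}))$ by definition of $E_{d-1}$, and placing optimal realizers of the numbers $E_{d-1}(n_{\hat{d}}(S_{2;i}))$ in pairwise far-apart regions of ${\nats}^{d-1}$ yields a single set of order $\sum_i n_{\hat{d}}(S_{2;i})=|S_2|$ carrying $\sum_i E_{d-1}(n_{\hat{d}}(S_{2;i}))$ induced edges. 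Substituting back into~(\ref{eqn:rec-exact1}) gives $E_d(n)\leq E_d(|S_1|)+E_{d-1}(|S_2|)+|S_2|=E_d(n_1)+E_{d-1}(n_2)+n_2$, which is~(\ref{eqn:gen-rec-ineq}) and hence Lemma~\ref{lmm:rec-ineq}. I expect the fiddly point to be the telescoping in the middle step — making sure the straddling edges and the vertical edges interior to $S_2$ are each counted exactly once and assemble cleanly to $|S_2|$, since this is where full-nestedness is doing all the work; the symmetry reduction and the superadditivity remark are both routine.
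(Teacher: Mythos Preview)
Your proof is correct and follows essentially the same route as the paper: the exact cut identity (\ref{eqn:rec-exact1}), the slice decomposition yielding (\ref{eqn:S2-part}), and then the superadditivity bound $\sum_i E_{d-1}(\pi_{\hat d}(S_{2;i}))\leq E_{d-1}(|S_2|)$ combined with $E_d(S_1)\leq E_d(n_1)$. You are slightly more explicit than the paper about why the vertical-edge count assembles to $|S_2|$ and about how superadditivity is realized, but the argument is the same.
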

Note that (\ref{eqn:rec-ineq}) does not hold for 
any partition $n = n_1+n_2$; only for the mentioned
particular partitions. The main thing to notice 
in Lemma~\ref{lmm:rec-ineq} is that there {\em exists}
a proper partition of $n$ that yields the desired
inequality.

Let $S\subseteq {\nats}^d$ be a fully nested and optimal with $|S|=n$.
Of particular interest is the special cut when $k=h = |\pi_d(S)|$,
so $S_2$ is a $(d-1)$-dimensional side of $S$.
In this case $S_2 = S_{2;h}$ and
so by (\ref{eqn:rec-exact1}) and (\ref{eqn:S2-part}) we obtain
\begin{equation}
\label{eqn:rec-exact2}
E_d(n) = E_d(S) = E_d(S_1) + E_{d-1}(\pi_{\hat{d}}(S_2)) + |S_2|.
\end{equation}
Note that if in addition $S_1$ and $S_2$ are also optimal,
then (\ref{eqn:rec-exact2}) will yield an equality in 
(\ref{eqn:rec-ineq}).
We will see that such an equality can be
obtained in (\ref{eqn:rec-ineq}).
With this in mind, it is our next objective to show that for each
$n\in\nats$ there is always a fully nested and optimal set $S$ of order $n$ 
and a cut with a partition $S = S_1\cup S_2$ where $S_2$ is a side of $S$
such that equality holds in (\ref{eqn:rec-ineq}). To do that we
need results in the next section.

\section{Pseudo cubes, pseudo cubics and their properties}
\label{sec:cubics}

In this section we define some specific representations for 
integers, their corresponding
sets in ${\nats}^d$, and prove some properties that will demonstrate 
that we can always assume that an optimal $S\subseteq {\nats}^d$ is 
one of these corresponding sets.
\begin{definition}
\label{def:pseudo-cube}
For $d\in\nats$, call a number $n\in\nats$ a {\em pseudo $d$-cubic} 
if $n = (m+1)^{\ell}m^{d-\ell} := [m,\ell]^d$ for some $m\in\nats$ and 
$\ell\in\{0,1,\ldots,d-1\}$. 
A {\em pseudo cubic} is then a pseudo $d$-cubic for some $d$.

Any pseudo $d$-cubic $n$ yields a corresponding 
{\em pseudo $d$-cube} $\llbracket m,\ell\rrbracket^d 
:= [m+1]^{\ell}\times[m]^{d-\ell}\subseteq {\nats}^d$.
\end{definition}
{\sc Remarks:} (i) Although the word ``cubic'' is an adjective,
we will use it both as such and also as a noun.
(ii) We do reserve the right to interpret $[m,\ell]^d$ when $\ell = d$
by the defining algebraic expression in Definition~\ref{def:pseudo-cube},
so $[m,d]^d = [m+1,0]^d$. 
(iii) A pseudo $d$-cube $\llbracket m,\ell\rrbracket^d$ has $2d$ sides; 
$2\ell$ of which are copies of 
$\llbracket m,\ell-1\rrbracket^{d-1}$, and $2(d-\ell)$ of which
are copies of $\llbracket m,\ell\rrbracket^{d-1}$, both types of sides are
pseudo $(d-1)$-cubes.

\vspace{3 mm}

With the above remark in mind, then clearly for $d$ fixed,
every $n\in\nats$ is between two
pseudo $d$-cubics: $[m,\ell]^d \leq n < [m,\ell+1]^d$ for some $m$ and 
$\ell \in\{0,1,\ldots,d-1\}$. Since 
$(m+1)^{\ell+1}m^{d-\ell-1} - (m+1)^{\ell}m^{d-\ell} = (m+1)^{\ell}m^{d-\ell-1}$,   
then the difference between two consecutive pseudo $d$-cubics
is a pseudo $(d-1)$-cubic.

Recall the lexicographical ordering on ${\ints}^d$:
\[
\tilde{x} < \tilde{y} \Leftrightarrow 
x_i = y_i \mbox{ for } 1\leq i\leq j-1 \mbox{ and } x_j < y_j.
\]
The lexicographical ordering is a total/linear ordering 
of the elements of ${\ints}^d$. 

With the above convention we have similarly to the 
Pascal's Rule for binomial coefficient the following. 
\begin{claim}
\label{clm:pc-diff}
For a fixed $m$ and $\ell\in \{0,1,\ldots,d-1\}$ 
we have for pseudo cubics that 
\[
[m,\ell+1]^d = [m,\ell]^d + [m,\ell]^{d-1}.
\]
Also, for a fixed $d$ we have $[m,\ell]^d\leq [m',\ell']^d$
iff $(m,\ell)\leq (m',\ell')$ lexicographically.
\end{claim}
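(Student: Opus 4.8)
The plan is to verify both assertions by direct algebraic manipulation of the defining expression $[m,\ell]^d = (m+1)^{\ell}m^{d-\ell}$, keeping Remark~(ii) in mind so that the endpoint case $\ell = d-1$ (where $\ell+1 = d$) is handled correctly. For the first identity, I would simply compute
\[
[m,\ell]^d + [m,\ell]^{d-1} = (m+1)^{\ell}m^{d-\ell} + (m+1)^{\ell}m^{d-1-\ell}
 = (m+1)^{\ell}m^{d-1-\ell}\bigl(m + 1\bigr) = (m+1)^{\ell+1}m^{d-1-\ell},
\]
which is exactly $[m,\ell+1]^d$ by definition, and agrees with $[m,d]^d = [m+1,0]^d = (m+1)^d$ when $\ell+1 = d$ by the convention in Remark~(ii). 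This is the routine part and should present no obstacle.

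For the second assertion — that $[m,\ell]^d \le [m',\ell']^d$ iff $(m,\ell) \le (m',\ell')$ lexicographically, where $\ell,\ell' \in \{0,\dots,d-1\}$ — I would argue as follows. First, if $m < m'$, then using the first identity repeatedly (Pascal-style telescoping) one has $[m',0]^d = [m,d-1]^d + [m,d-2]^d + \cdots$, or more directly $[m,\ell]^d \le [m,d-1]^d < [m+1,0]^d \le [m',0]^d \le [m',\ell']^d$; here the strict step $[m,d-1]^d < [m+1,0]^d$ is the difference $[m,d-1]^{d-1} = (m+1)^{d-1} > 0$. Second, if $m = m'$, then $[m,\ell]^d = (m+1)^{\ell}m^{d-\ell} = m^d\bigl(\tfrac{m+1}{m}\bigr)^{\ell}$ is strictly increasing in $\ell$ (since $(m+1)/m > 1$), so $[m,\ell]^d \le [m,\ell']^d$ iff $\ell \le \ell'$. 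Combining the two cases gives the forward direction; the converse follows by the same two computations read in reverse, or by noting that lexicographic order is total and the map $(m,\ell) \mapsto [m,\ell]^d$ is, by the above, strictly monotone, hence injective and order-preserving in both directions.

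The one point requiring a little care — and the closest thing to an obstacle — is making sure the strict inequality between \emph{consecutive} blocks lines up: that $[m,d-1]^d < [m+1,0]^d$, i.e.\ that there is no overlap in the ranges of the parameter $\ell \in \{0,\dots,d-1\}$ as $m$ increments. This is precisely the observation already recorded in the paragraph preceding the Claim, namely that the gap $(m+1)^{\ell+1}m^{d-\ell-1} - (m+1)^{\ell}m^{d-\ell} = (m+1)^{\ell}m^{d-\ell-1}$ between consecutive pseudo $d$-cubics is a (positive) pseudo $(d-1)$-cubic; so I would just invoke that computation. Everything else is bookkeeping on a chain of strict inequalities, and the analogy with Pascal's rule for $\binom{n}{k} = \binom{n-1}{k} + \binom{n-1}{k-1}$ (with $[m,\ell]^d$ playing the role of the binomial coefficient and $m,\ell$ the two indices) makes the structure transparent.
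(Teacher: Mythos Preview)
Your proposal is correct. The paper actually states Claim~\ref{clm:pc-diff} without proof, treating both parts as immediate consequences of the defining formula $[m,\ell]^d = (m+1)^{\ell}m^{d-\ell}$ and of the sentence just before the claim (that the gap between consecutive pseudo $d$-cubics is a positive pseudo $(d-1)$-cubic). Your write-up supplies exactly the direct computation the authors suppress: factoring out $(m+1)^{\ell}m^{d-1-\ell}$ for the identity, and for the ordering assertion, the two-case analysis (monotonicity in $\ell$ for fixed $m$, and the strict jump $[m,d-1]^d < [m+1,0]^d$ to handle $m < m'$) together with totality of the lexicographic order. There is nothing missing; if anything, you are more explicit than the paper about the only nontrivial point, namely that the ranges for successive values of $m$ do not overlap.
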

Note that although the partition in Claim~\ref{clm:pc-diff}
could be defined for all integer values of $\ell$, including negative $\ell$, it
is an integer partition only for $\ell\in\{0,1,\ldots,d-1\}$.

In a similar fashion to the unique binomial representation of an 
integer~\cite[p.~55]{Stanley-comb-comm} and~\cite[Lemma 7.1]{Hibi},
Claim~\ref{clm:pc-diff} yields the following. 
\begin{proposition}
\label{prp:!-pc-rep}
Every $n\in\nats$ has a unique 
{\em pseudo $d$-cubic representation ($d$-PCR)} as
\[
n = [m_d,{\ell}_d]^d + [m_{d-1},{\ell}_{d-1}]^{d-1} + \cdots 
+ [m_{c},{\ell}_{c}]^{c} 
\]
where $c, m_{c} \geq 1$ and 
$(m_d,{\ell}_d) > (m_{d-1},{\ell}_{d-1}) > \cdots > (m_{c},{\ell}_{c})$ 
lexicographically.
\end{proposition}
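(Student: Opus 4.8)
The plan is to mimic the classical argument for the uniqueness of the binomial (combinatorial/Macaulay) representation, using Claim~\ref{clm:pc-diff} as the analogue of Pascal's rule. First I would establish \emph{existence} by a greedy construction: given $n\in\nats$ and $d\in\nats$, pick the lexicographically largest pair $(m_d,\ell_d)$ with $\ell_d\in\{0,1,\ldots,d-1\}$ such that $[m_d,\ell_d]^d\le n$; this is well defined since the pseudo $d$-cubics $1=[1,0]^d\le[1,1]^d\le\cdots$ are strictly increasing and unbounded by the last part of Claim~\ref{clm:pc-diff}. Set $n':=n-[m_d,\ell_d]^d$. If $n'=0$ we stop with $c=d$; otherwise, by maximality of $(m_d,\ell_d)$ we have $n<[m_d,\ell_d+1]^d=[m_d,\ell_d]^d+[m_d,\ell_d]^{d-1}$ (reading $[m_d,d]^d=[m_d+1,0]^d$ per Remark (ii)), so $n'<[m_d,\ell_d]^{d-1}$. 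Now recurse in dimension $d-1$ on $n'$: by induction it has a $(d-1)$-PCR $n'=[m_{d-1},\ell_{d-1}]^{d-1}+\cdots+[m_c,\ell_c]^c$ with $c\ge1$, $m_c\ge1$, and strictly lexicographically decreasing parameter pairs. The bound $n'<[m_d,\ell_d]^{d-1}$ together with the last part of Claim~\ref{clm:pc-diff} (applied in dimension $d-1$) forces $(m_{d-1},\ell_{d-1})<(m_d,\ell_d)$ lexicographically, so concatenating the leading term gives a valid $d$-PCR of $n$. The base case $d=1$ is immediate: $[m,0]^1=m$, so $n=[n,0]^1$ is the unique representation.

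For \emph{uniqueness} I would argue by induction on $d$, the case $d=1$ being trivial. Suppose $n=[m_d,\ell_d]^d+R$ with $R=[m_{d-1},\ell_{d-1}]^{d-1}+\cdots+[m_c,\ell_c]^c$ a valid $d$-PCR. The crucial claim is that $[m_d,\ell_d]^d$ is forced to be the greedy choice, i.e.\ the largest pseudo $d$-cubic not exceeding $n$; once this is shown, $R=n-[m_d,\ell_d]^d$ is determined, $R<[m_d,\ell_d]^{d-1}$ as above, and $R$ is a valid $(d-1)$-PCR (since $(m_{d-1},\ell_{d-1})<(m_d,\ell_d)$ and all lower pairs are dimension-$\le d-1$ pseudo cubics), so the inductive hypothesis pins it down uniquely. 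To see $[m_d,\ell_d]^d$ is the greedy choice, it suffices to prove $R<[m_d,\ell_d]^{d-1}=[m_d,\ell_d+1]^d-[m_d,\ell_d]^d$, which gives $n<[m_d,\ell_d+1]^d$ and hence (using strict monotonicity of pseudo $d$-cubics in the lex order of parameters) that no larger pseudo $d$-cubic fits under $n$, while $[m_d,\ell_d]^d\le n$ is given. The inequality $R<[m_d,\ell_d]^{d-1}$ is exactly the tail estimate for decreasing PCRs.

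The main obstacle—and the one genuinely combinatorial lemma needed—is that tail estimate: \textbf{if} $[m_{d-1},\ell_{d-1}]^{d-1}+\cdots+[m_c,\ell_c]^c$ is a strictly lex-decreasing PCR in the sense of Proposition~\ref{prp:!-pc-rep} with leading pair $(m_{d-1},\ell_{d-1})$, \textbf{then} the sum is strictly less than $[m_{d-1},\ell_{d-1}+1]^{d-1}$ (equivalently $\le[m_{d-1},\ell_{d-1}]^{d-1}+[m_{d-1},\ell_{d-1}]^{d-2}-1$ if one wants the sharp form, but the strict inequality is all that is required). I would prove this by a secondary induction, on the number of summands or on $d-1$, using Claim~\ref{clm:pc-diff} to telescope: the "gap" between $[m,\ell]^{d}$ and the next pseudo $d$-cubic is the pseudo $(d-1)$-cubic $[m,\ell]^{d-1}$, and the remaining terms, being built from strictly smaller pairs in strictly smaller dimensions, are bounded by the geometric-like sum of these gaps; the standard Macaulay-representation telescoping then yields the strict bound. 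This is the step to write out carefully; the rest of the argument is bookkeeping with the lexicographic order and Remark (ii) on the boundary convention $[m,d]^d=[m+1,0]^d$.
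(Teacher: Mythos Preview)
Your proposal is correct and follows essentially the same greedy approach as the paper: choose the largest pseudo $d$-cubic not exceeding $n$, use Claim~\ref{clm:pc-diff} to bound the remainder below $[m_d,\ell_d]^{d-1}$ and thereby obtain the lexicographic drop, then recurse in dimension $d-1$. The paper's own proof is a very terse sketch that leaves uniqueness implicit in ``the rest follows by induction on $n$''; your explicit tail estimate (a valid PCR with leading pair $(m,\ell)$ in dimension $k$ is strictly less than $[m,\ell+1]^k$, proved by telescoping with Claim~\ref{clm:pc-diff}) is precisely the missing detail and is the standard way to complete such Macaulay-type arguments.
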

\begin{proof}
We proceed in a greedy fashion; for a given $n$ we choose 
the unique $(m_d,{\ell}_d)$ such that
$[m_d,{\ell}_d]^d$ is the largest pseudo $d$-cubic less than or equal to $n$. 
We continue by letting $[m_{d-1},{\ell}_{d-1}]^{d-1}$ be the largest 
pseudo $(d-1)$-cube less than or equal to $n-[m_d,{\ell}_d]^d$.
By Claim~\ref{clm:pc-diff} we have then have 
$(m_d,{\ell}_d) > (m_{d-1},{\ell}_{d-1})$. The rest follows by induction 
on $n$.
\end{proof}
When either $d$ is fixed or irrelevant, we will just write PCR 
for $d$-PCR of a natural number $n$.

\vspace{3 mm}

{\sc Remarks:} (i) Note that for the $d$-PCR of $n$ we have 
$m_d = \lfloor \sqrt[d]{n}\rfloor$. (ii) Also, since 
$[m_d,{\ell}_d]^d \leq n < [m_d,{\ell}_d+1]^d$ then 
\[
{\ell}_d = \left\lfloor \frac{\log(n/m_d^d)}{\log(1+1/m_d)}\right\rfloor.
\]
This can be used to obtain a quick recursive method to obtain the 
$d$-PCR of $n$ as indicated in Observation~\ref{ops:PCR-recursive}
here below.
(ii) By letting $m_1 = m_2 = \cdots = m_{c - 1} = 0$ and
${\ell}_1 = {\ell}_2 = \cdots = {\ell}_{c - 1} = 0$, and noting that 
$[0,0]^d = 0$, we can, when needed, assume each $d$-PCR to have 
exactly $d$ terms. 
\begin{observation}
\label{obs:PCR-compare}
If $n,n'\in\nats$ have $d$-PCR given by
\begin{eqnarray*}
n & = & [m_d,{\ell}_d]^d + [m_{d-1},{\ell}_{d-1}]^{d-1} + 
        \cdots + [m_1,{\ell}_1]^1, \\
n' & = & [m'_d,{\ell}'_d]^d + [m'_{d-1},{\ell}'_{d-1}]^{d-1} + 
\cdots + [m'_1,{\ell}'_1]^1, \\
\end{eqnarray*}
then $n\leq n'$ iff $(m_d,{\ell}_d,m_{d-1},{\ell}_{d-1},\ldots,m_1,{\ell}_1)
\leq (m'_d,{\ell}'_d,m'_{d-1},{\ell}'_{d-1},\ldots,m'_1,{\ell}'_1)$
lexicographically.
\end{observation}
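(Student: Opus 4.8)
The plan is to treat the (padded) $d$-PCR as a positional numeral system and to show that the induced map on digit–tuples is order preserving. By Proposition~\ref{prp:!-pc-rep}, assigning to a tuple $T=(m_d,\ell_d,\ldots,m_1,\ell_1)$ — with the trailing–zeros padding and the convention $[0,0]^j=0$ — the integer $n(T)=\sum_{j=1}^d [m_j,\ell_j]^j$ gives a bijection onto $\nats$ from the set of tuples whose non-zero part is a legal strictly decreasing PCR. Hence, by trichotomy, the Observation reduces to the single implication: if $T<T'$ lexicographically, then $n(T)<n(T')$; the converses then follow, since $T=T'$ forces $n(T)=n(T')$ and $T>T'$ is the same implication with the roles swapped.

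The engine is a ``small remainder'' estimate extracted from the greedy construction in the proof of Proposition~\ref{prp:!-pc-rep}. If $n$ has $d$-PCR $\sum_{j=c}^{d}[m_j,\ell_j]^j$ and $r_k=\sum_{c\le j\le k}[m_j,\ell_j]^j$, then since $[m_{k+1},\ell_{k+1}]^{k+1}$ is the largest pseudo $(k+1)$-cubic not exceeding $r_{k+1}$ we get $r_k=r_{k+1}-[m_{k+1},\ell_{k+1}]^{k+1}<[m_{k+1},\ell_{k+1}+1]^{k+1}-[m_{k+1},\ell_{k+1}]^{k+1}$, which by the Pascal-type identity of Claim~\ref{clm:pc-diff} (extended by the wrap convention) equals $[m_{k+1},\ell_{k+1}]^{k}$. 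In particular, for any level $k$ with $(m_k,\ell_k)\ne(0,0)$ one has $\sum_{j<k}[m_j,\ell_j]^j<[m_k,\ell_k]^{k-1}$: the part of the representation strictly below level $k$ is smaller than one ``unit'' at level $k$.

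Now I would take $T<T'$ lexicographically and let $k$ be the largest level at which $(m_k,\ell_k)$ and $(m'_k,\ell'_k)$ differ; then $(m_k,\ell_k)<(m'_k,\ell'_k)$ as pairs, all strictly higher terms agree and cancel in $n(T')-n(T)$. If $(m_k,\ell_k)=(0,0)$, everything of $T$ at levels $\le k$ is padding, so $n(T')-n(T)=[m'_k,\ell'_k]^k+\sum_{j<k}[m'_j,\ell'_j]^j\ge [m'_k,\ell'_k]^k\ge 1$ and we are done. Otherwise $m_k\ge 1$, and I use the monotonicity clause of Claim~\ref{clm:pc-diff} together with the fact that the lexicographic successor of $(m_k,\ell_k)$ inside the alphabet $\{(m,\ell):\ell\in\{0,\ldots,k-1\}\}$ is $(m_k,\ell_k+1)$ if $\ell_k<k-1$ and $(m_k+1,0)$ if $\ell_k=k-1$ — either way $\le (m'_k,\ell'_k)$ — so, via the convention $[m,k]^k=[m+1,0]^k$, we get $[m'_k,\ell'_k]^k\ge [m_k,\ell_k+1]^k=[m_k,\ell_k]^k+[m_k,\ell_k]^{k-1}$. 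Combining, $n(T')-n(T)=\big([m'_k,\ell'_k]^k-[m_k,\ell_k]^k\big)+\sum_{j<k}[m'_j,\ell'_j]^j-\sum_{j<k}[m_j,\ell_j]^j\ge [m_k,\ell_k]^{k-1}+0-\big([m_k,\ell_k]^{k-1}-1\big)=1>0$, the last negative term being controlled by the small-remainder estimate. Thus $n(T)<n(T')$.

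The step I expect to be the real obstacle is not any single inequality but the bookkeeping: making the greedy ``small remainder'' estimate precise when the leading term can be $[0,0]^j=0$ (padding), and handling the wrap-around convention $[m,d]^d=[m+1,0]^d$ consistently so that ``pass to the lexicographic successor of a digit'' and ``apply Claim~\ref{clm:pc-diff}'' line up across the boundary $\ell=d-1\to d$. Everything else is a routine combination of Claim~\ref{clm:pc-diff} (the Pascal identity together with the lexicographic monotonicity of pseudo-cubics) and the greedy algorithm already used to prove Proposition~\ref{prp:!-pc-rep}.
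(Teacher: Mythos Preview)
Your argument is correct. The paper states this as an Observation without proof; what you have written is exactly the natural way to unpack it from the greedy construction in Proposition~\ref{prp:!-pc-rep} (which yields your ``small remainder'' bound $\sum_{j<k}[m_j,\ell_j]^j<[m_k,\ell_k]^{k-1}$) together with the monotonicity and Pascal identity of Claim~\ref{clm:pc-diff}, and your handling of the padding case and the wrap $\ell_k=k-1$ is clean.
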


The following observation is convenient when working recursively.
\begin{observation}
\label{ops:PCR-recursive}
If $n\in\nats$ has a $d$-PCR given by
\[
n = [m_d,{\ell}_d]^d + [m_{d-1},{\ell}_{d-1}]^{d-1} + \cdots 
+ [m_{c},{\ell}_{c}]^{c}, 
\]
then the $(d-1)$-PCR of $n-[m_d,{\ell}_d]^d$ is given by
\[
n - [m_d,{\ell}_d]^d = [m_{d-1},{\ell}_{d-1}]^{d-1} + \cdots 
+ [m_{c},{\ell}_{c}]^{c}. 
\]
\end{observation}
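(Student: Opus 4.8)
The plan is to deduce this directly from the uniqueness of the pseudo-cubic representation in Proposition~\ref{prp:!-pc-rep}, applied one dimension down. Write $n' := n - [m_d,\ell_d]^d$. From the displayed $d$-PCR of $n$ we obtain, by plain subtraction, the identity
\[
n' = [m_{d-1},\ell_{d-1}]^{d-1} + \cdots + [m_c,\ell_c]^c ,
\]
so all that remains is to recognize the right-hand side as a genuine $(d-1)$-PCR; the uniqueness clause of Proposition~\ref{prp:!-pc-rep} (for dimension $d-1$) then identifies it with \emph{the} $(d-1)$-PCR of $n'$. Thus the proof reduces to a short verification of the defining properties of a PCR, plus handling one degenerate case.

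To see that the right-hand side meets the requirements of a $(d-1)$-PCR, observe that almost all of them are inherited term-by-term from the $d$-PCR of $n$: the superscripts still run consecutively $d-1, d-2, \ldots, c$; each $[m_i,\ell_i]^i$ with $c\leq i\leq d-1$ is a pseudo $i$-cubic (so $\ell_i\in\{0,\ldots,i-1\}$) exactly because it already was one inside the $d$-PCR; the tuples still satisfy $(m_{d-1},\ell_{d-1})>\cdots>(m_c,\ell_c)$ lexicographically, being a subchain of the original chain; and $c,m_c\geq 1$. The one point deserving a line of proof is that the leading superscript is honestly $d-1$, i.e.\ that $[m_{d-1},\ell_{d-1}]^{d-1}\neq 0$: the lexicographic inequalities force $m_{d-1}\geq m_{d-2}\geq\cdots\geq m_c\geq 1$, hence $m_{d-1}\geq 1$ and $[m_{d-1},\ell_{d-1}]^{d-1}=(m_{d-1}+1)^{\ell_{d-1}}m_{d-1}^{\,d-1-\ell_{d-1}}\geq 1$.

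The only genuinely degenerate case is $c=d$, meaning $n=[m_d,\ell_d]^d$ is itself a pseudo $d$-cubic; then the tail is empty and $n'=0$, which we read as the empty PCR. Beyond this bit of bookkeeping there is no real obstacle: the statement is in essence just the remark that the greedy procedure in the proof of Proposition~\ref{prp:!-pc-rep}, run on $n$, produces from its second step onward exactly the greedy procedure run on $n'$ in dimension $d-1$. One could equally present the argument in that form, invoking the ``the rest follows by induction on $n$'' step of the proof of Proposition~\ref{prp:!-pc-rep} directly; I would use whichever of the two phrasings is shorter in context.
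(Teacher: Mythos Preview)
Your proposal is correct and matches the paper's intent: the paper states this as an observation without proof, treating it as immediate from the uniqueness clause of Proposition~\ref{prp:!-pc-rep}, which is precisely the mechanism you invoke. Your verification that the tail satisfies the defining conditions of a $(d-1)$-PCR (consecutive superscripts, lexicographic descent, $m_{d-1}\geq m_c\geq 1$, and the degenerate case $c=d$) is exactly the routine check the paper leaves implicit.
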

The PCR of an integer gives rise to a special fully nested configuration.
In order to describe this we need the following definition.
\begin{definition}
\label{def:lift}
For a set $J\subseteq[d]$ with $|J|=j$
and a fixed point $\tilde{a}\in{\nats}^j$ we have a 
lifting map 
\[
\lambda_{J;\tilde{a}} : {\ints}^{d-j} \rightarrow {\ints}^d,
\]
such that 
$\pi_J\circ\lambda_{J;\tilde{a}} : {\ints}^{d-j} \rightarrow {\ints}^j$
is the constant map taking each element to $\tilde{a}$,
and $\pi_{[d]\setminus J}\circ\lambda_{J;\tilde{a}} : 
{\ints}^{d-j} \rightarrow {\ints}^{d-j}$ is the identity map.
\end{definition}
If the set $J$ is given explicitly $J = \{h_1,\ldots,h_j\}$,
then we usually write $\lambda_{J;\tilde{a}}$ as 
$\lambda_{h_1,\ldots,h_j;\tilde{a}}$.

From the $d$-PCR $n = [m_d,{\ell}_d]^d + \cdots + [m_{c},{\ell}_{c}]^{c}$
where $c, m_{c} \geq 1$,
we obtain the set $\llbracket n\rrbracket^d\subseteq {\nats}^d$ 
of order $n$ recursively by setting 
$\llbracket 0\rrbracket^i := \emptyset$ for each $i$ and 
\begin{equation}
\label{eqn:cubicle-rec}
\llbracket n\rrbracket^d := 
\llbracket m_d,{\ell}_d\rrbracket^d \cup
   \lambda_{{\ell}_d+1;m_d+1}(\llbracket n - [m_d,{\ell}_d]^d\rrbracket^{d-1}).
\end{equation}
We list some properties of these sets 
$\llbracket n\rrbracket^d\subseteq {\nats}^d$ that are immediate.
From the 
$d$-PCR $n = [m_d,{\ell}_d]^d + \cdots + [m_{c},{\ell}_{c}]^{c}$
we have the following.
\begin{proposition}
\label{prp:cubicle}
For $n\in\nats$ we have 
\begin{enumerate}
  \item $|\llbracket n\rrbracket^d| = n$.
  \item $\llbracket n\rrbracket^d\subseteq \llbracket n'\rrbracket^d$ 
iff $n\leq n'$.
  \item $\llbracket n\rrbracket^d$ is fully nested.
\end{enumerate}
\end{proposition}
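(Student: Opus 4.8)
The plan is to prove all three items by a single induction on $d$ (with $n$ arbitrary), using the recursive definition~(\ref{eqn:cubicle-rec}) of $\llbracket n\rrbracket^d$. The base case $d=1$ is immediate: the $1$-PCR of $n$ is simply $n=[n,0]^1$, so $\llbracket n\rrbracket^1 = [n]$, which has order $n$, is a sub-interval structure (hence nested), and is monotone in $n$. For the inductive step I would fix $d\geq 2$, assume all three properties hold for dimension $d-1$, and peel off the leading term of the $d$-PCR, writing $n = [m,\ell]^d + n'$ where $(m,\ell)=(m_d,\ell_d)$ and $n' = n-[m,\ell]^d$, noting that by Observation~\ref{ops:PCR-recursive} the quantity $n'$ has $(d-1)$-PCR exactly the tail of the $d$-PCR of $n$, and that $0\le n' < [m,\ell]^{d-1}$ by the greedy/maximality property from Proposition~\ref{prp:!-pc-rep} and Claim~\ref{clm:pc-diff}.

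\medskip

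For item~1, from~(\ref{eqn:cubicle-rec}) the set $\llbracket n\rrbracket^d$ is the disjoint union of $\llbracket m,\ell\rrbracket^d = [m+1]^\ell\times[m]^{d-\ell}$, which has $(m+1)^\ell m^{d-\ell} = [m,\ell]^d$ points, and the lifted copy $\lambda_{\ell+1;m+1}(\llbracket n'\rrbracket^{d-1})$, which has $|\llbracket n'\rrbracket^{d-1}| = n'$ points by the inductive hypothesis (the lifting map is injective). Disjointness is the one thing to check: every point of the first set has $(\ell+1)$-st coordinate at most $m$, whereas every point in the image of $\lambda_{\ell+1;m+1}$ has $(\ell+1)$-st coordinate exactly $m+1$. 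Hence the total is $[m,\ell]^d + n' = n$. For item~2 (monotonicity), I would argue by induction together with Observation~\ref{obs:PCR-compare}: comparing the $d$-PCRs of $n\le n'$ lexicographically, either the leading pairs agree, in which case the leading $\llbracket m,\ell\rrbracket^d$ blocks coincide and one reduces to the $(d-1)$-dimensional containment $\llbracket n-[m,\ell]^d\rrbracket^{d-1}\subseteq\llbracket n'-[m,\ell]^d\rrbracket^{d-1}$ under the same lifting map; or the leading pair of $n$ is strictly smaller, say $(m,\ell)<(m',\ell')$, in which case $\llbracket n\rrbracket^d\subseteq\llbracket m,\ell\rrbracket^d$ (using the weak version of item~2 inside a fixed pseudo-cube, plus $n'<[m,\ell]^{d-1}$ so the lifted part still lies within $[m+1]^\ell\times\cdots$) and $\llbracket m,\ell\rrbracket^d = [m+1]^\ell\times[m]^{d-\ell}\subseteq[m'+1]^{\ell'}\times[m']^{d-\ell'}\subseteq\llbracket n'\rrbracket^d$, the middle containment of honest pseudo-cubes being a direct coordinatewise check from $(m,\ell)<(m',\ell')$.

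\medskip

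For item~3, I would show $\llbracket n\rrbracket^d$ is $i$-nested for each $i\in[d]$ by checking the chain of projections $\pi_{\hat\imath}(\llbracket n\rrbracket^d\cap\pi_i^{-1}(1))\supseteq\pi_{\hat\imath}(\cdots\cap\pi_i^{-1}(2))\supseteq\cdots$ from Definition~\ref{def:nested}. Split into two cases according to whether $i\le \ell+1$ or $i>\ell+1$ relative to the splitting coordinate $\ell+1$. When $i=\ell+1$: the slice at level $m+1$ is exactly $\llbracket n'\rrbracket^{d-1}$ (embedded), while each slice at level $k\le m$ is all of $[m+1]^\ell\times[m]^{d-\ell-1}$ (the $(\ell+1)$-st coordinate deleted), and $\llbracket n'\rrbracket^{d-1}\subseteq[m+1]^\ell\times[m]^{d-\ell-1}$ because $n'<[m,\ell]^{d-1}$ combined with item~2 for dimension $d-1$; the slices among themselves at levels $\le m$ are all equal, so the chain holds. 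When $i\neq\ell+1$: the slice $\llbracket n\rrbracket^d\cap\pi_i^{-1}(k)$ decomposes as a slice of the pseudo-cube $\llbracket m,\ell\rrbracket^d$ (a product, hence its $\pi_i$-slices are nested trivially — constant for $k$ up to $m$ or $m+1$, then empty) together with the lifted slice of $\llbracket n'\rrbracket^{d-1}$, which is $i$-nested in dimension $d-1$ by the inductive hypothesis; the union respects the nesting because the pseudo-cube part dominates the lifted part coordinatewise in the remaining directions (again via item~2 in dimension $d-1$, and the lift pins one coordinate to its maximal value $m+1$).

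\medskip

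I expect the main obstacle to be item~3: verifying that the union of the pseudo-cube block and the lifted cubicle is genuinely nested requires carefully tracking, for the coordinate directions other than $\ell+1$, that the smaller lifted piece sits inside the appropriate face of the larger pseudo-cube, which is exactly where the inequality $n' < [m,\ell]^{d-1}$ and the dimension-$(d-1)$ containment from item~2 are both needed — so the three parts genuinely have to be proved together in one induction rather than separately. The bookkeeping of which coordinates the lifting map $\lambda_{\ell+1;m+1}$ freezes versus preserves, in combination with Definition~\ref{def:nested}'s chain condition, is the fiddly part; everything else is routine.
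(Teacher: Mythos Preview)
Your approach---peel off the leading term of the $d$-PCR via recursion~(\ref{eqn:cubicle-rec}) and reduce to one dimension lower---is exactly the paper's; you induct on $d$ whereas the paper (much more tersely) phrases it as induction on $n$, but both are valid and the underlying mechanism is identical.

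One slip to correct in your item~2: in the case where the leading pairs differ you assert $\llbracket n\rrbracket^d\subseteq\llbracket m,\ell\rrbracket^d$, but this is false whenever $n>[m,\ell]^d$, since the lifted piece $\lambda_{\ell+1;m+1}(\llbracket n-[m,\ell]^d\rrbracket^{d-1})$ has $(\ell{+}1)$-st coordinate equal to $m{+}1$, hence lies outside $[m{+}1]^\ell\times[m]^{d-\ell}$. The correct intermediate set is $\llbracket m,\ell{+}1\rrbracket^d$: your own bound $n-[m,\ell]^d<[m,\ell]^{d-1}$ together with item~2 in dimension $d{-}1$ gives $\llbracket n\rrbracket^d\subseteq\llbracket m,\ell{+}1\rrbracket^d$, and then $(m,\ell)<(m',\ell')$ forces $(m,\ell{+}1)\le(m',\ell')$, whence $\llbracket m,\ell{+}1\rrbracket^d\subseteq\llbracket m',\ell'\rrbracket^d\subseteq\llbracket n'\rrbracket^d$. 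With that one-symbol fix your chain goes through; the rest of the sketch, including the item~3 case split on $i=\ell{+}1$ versus $i\ne\ell{+}1$, is sound.
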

\begin{proof}
Since the defining union in the recursion (\ref{eqn:cubicle-rec}) is disjoint,
the first assertion follows by induction on $n$.

The second assertion follows from Proposition~\ref{prp:!-pc-rep}.

For the third assertion we see that since 
$\llbracket m_d,{\ell}_d\rrbracket^d$ is fully nested,
then for each $i\neq {\ell}_m+1$, $\llbracket n\rrbracket^d$ is $i$-nested
iff 
$\lambda_{{\ell}_d+1;m_d+1}(\llbracket n - [m_d,{\ell}_d]^d\rrbracket^{d-1})$
is $i$-nested, which by induction on $n$ is $i$-nested. Since
$\llbracket m_d,{\ell}_d\rrbracket^{d-1}$ contains 
$\llbracket n - [m_d,{\ell}_d]^d\rrbracket^{d-1}$ we also have 
$({\ell}_m+1)$-nestedness.
\end{proof}
\begin{definition}
\label{def:cubicle}
For $n\in\nats$, a set $\llbracket n\rrbracket^d\subseteq {\nats}^d$
as given above, is called a {\em $d$-cubicle}, or simply a 
{\em cubicle} if $d$ is irrelevant.
\end{definition}
As with cubics, we can also talk about a {\em side} of a cubicle as 
the intersection
of the cubicle with one of its supporting hyperplanes, most notably the 
planes $x_i = 1$ for various $i$. These $d$ sides of 
$\llbracket n\rrbracket^d$ are given by 
$\pi_{\hat{\imath}}(\llbracket n\rrbracket^d)$ for each $i$.
By the recursive definition (\ref{eqn:cubicle-rec}) we also have 
the following.
\begin{claim}
\label{clm:side}
For each $n\in\nats$ we have
that $\pi_{\hat{\imath}}(\llbracket n\rrbracket^d)$ is a $(d-1)$-cubicle.
Hence, each projection of $\llbracket n\rrbracket^d$ is also a cubicle.
\end{claim}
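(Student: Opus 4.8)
The plan is to argue by induction on $n$ (with $d$ fixed, or on $n+d$ if one prefers a uniform induction), unwinding the recursive definition~(\ref{eqn:cubicle-rec}) of the cubicle. Fix the $d$-PCR $n = [m_d,\ell_d]^d + \cdots + [m_c,\ell_c]^c$, so that
\[
\llbracket n\rrbracket^d = \llbracket m_d,\ell_d\rrbracket^d \cup \lambda_{\ell_d+1;\,m_d+1}\bigl(\llbracket n-[m_d,\ell_d]^d\rrbracket^{d-1}\bigr).
\]
I would split the projection $\pi_{\hat\imath}$ into two cases according to whether $i$ equals the ``growth direction'' $\ell_d+1$ or not, and handle each case separately.

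\emph{Case $i \ne \ell_d+1$.} Here the lifting map $\lambda_{\ell_d+1;\,m_d+1}$ fixes the $i$-th coordinate (it acts as the identity on all coordinates outside $\{\ell_d+1\}$), so $\pi_{\hat\imath}$ commutes with it in the appropriate sense, and I get
\[
\pi_{\hat\imath}(\llbracket n\rrbracket^d) = \pi_{\hat\imath}(\llbracket m_d,\ell_d\rrbracket^d) \cup \lambda'\bigl(\pi_{\hat\imath}(\llbracket n-[m_d,\ell_d]^d\rrbracket^{d-1})\bigr),
\]
where $\lambda'$ is the corresponding lifting map in one lower dimension. By Remark~(iii) after Definition~\ref{def:pseudo-cube}, the side $\pi_{\hat\imath}(\llbracket m_d,\ell_d\rrbracket^d)$ is a pseudo $(d-1)$-cube $\llbracket m_d,\ell_d-1\rrbracket^{d-1}$ or $\llbracket m_d,\ell_d\rrbracket^{d-1}$, hence is itself a cubicle; and by the induction hypothesis $\pi_{\hat\imath}(\llbracket n-[m_d,\ell_d]^d\rrbracket^{d-1})$ is a $(d-2)$-cubicle. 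I then need to check that the union of these two pieces, glued via $\lambda'$ at the right place, is exactly the cubicle attached to the $(d-1)$-PCR obtained by projecting term-by-term — which is where Observation~\ref{ops:PCR-recursive} and Claim~\ref{clm:pc-diff} (projecting $[m,\ell]^d$ to $[m,\ell-1]^{d-1}$ or $[m,\ell]^{d-1}$ depending on whether $i\le\ell$) do the bookkeeping.

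\emph{Case $i = \ell_d+1$.} Now the lifted copy $\lambda_{\ell_d+1;\,m_d+1}(\cdots)$ sits in a single hyperplane $x_{\ell_d+1} = m_d+1$ and is, by Proposition~\ref{prp:cubicle}(2), contained in the projection of the base block $\llbracket m_d,\ell_d\rrbracket^d$; so $\pi_{\hat\imath}(\llbracket n\rrbracket^d) = \pi_{\hat\imath}(\llbracket m_d,\ell_d\rrbracket^d)$, which by Remark~(iii) is a pseudo $(d-1)$-cube and hence a cubicle. The second sentence of the claim (every projection, not just onto a coordinate hyperplane through $1$) follows by iterating: a projection onto $\mathbb{Z}^{|I|}$ is a composition of single-coordinate projections, and each stays within the class of cubicles by what was just shown, together with the permutation-symmetry noted after Observation~\ref{obs:gi=inest}.

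The main obstacle I anticipate is purely notational rather than conceptual: making the identification in Case $i\ne\ell_d+1$ precise requires matching the recursively-defined set $\llbracket\cdot\rrbracket$ against the PCR obtained by projecting, and one must be careful that projecting a PCR term-by-term really does yield a valid PCR (the lexicographic strictness of the exponents/pairs is preserved because $[m,\ell]^d \mapsto [m,\ell']^{d-1}$ with $\ell'\in\{\ell-1,\ell\}$ is monotone in the lex order on pairs). Once that is pinned down, the induction closes immediately using Proposition~\ref{prp:cubicle}; I would keep the write-up short and lean on Observation~\ref{ops:PCR-recursive} to avoid re-deriving the PCR arithmetic.
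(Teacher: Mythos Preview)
The paper does not actually prove Claim~\ref{clm:side}; it simply asserts that the claim follows ``by the recursive definition~(\ref{eqn:cubicle-rec}).'' Your two-case induction on the recursion is exactly the natural way to unpack that assertion, and both cases are set up correctly.

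One correction to your bookkeeping in Case $i\neq\ell_d+1$: the device ``the $(d-1)$-PCR obtained by projecting term-by-term'' does not behave as you suggest, and the monotonicity check you propose is beside the point. (For instance, $11=[2,0]^3+[1,1]^2+[1,0]^1$ projects along $i=2$ to a set of order $6=[2,1]^2$, a single-term PCR, not a three-term one.) What actually closes the induction is simpler. Write $n'=n-[m_d,\ell_d]^d$ and let $p=\bigl|\pi_{\hat k}(\llbracket n'\rrbracket^{d-1})\bigr|$, where $k$ is the coordinate in $\nats^{d-1}$ corresponding to $i$. By induction $\pi_{\hat k}(\llbracket n'\rrbracket^{d-1})=\llbracket p\rrbracket^{d-2}$, and since $n'<[m_d,\ell_d]^{d-1}$, Proposition~\ref{prp:cubicle}(2) gives $\llbracket n'\rrbracket^{d-1}\subseteq\llbracket m_d,\ell_d\rrbracket^{d-1}$, whence $p\le[m_d,\ell']^{d-2}$ (with $\ell'=\ell_d-1$ or $\ell'=\ell_d$ matching the subcase $i\le\ell_d$ or $i>\ell_d+1$, since $k$ and $\ell'$ shift together). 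Then the recursive definition~(\ref{eqn:cubicle-rec}) directly yields
\[
\llbracket m_d,\ell'\rrbracket^{d-1}\cup\lambda_{\ell'+1;\,m_d+1}\bigl(\llbracket p\rrbracket^{d-2}\bigr)=\llbracket\,[m_d,\ell']^{d-1}+p\,\rrbracket^{d-1}
\]
when $p<[m_d,\ell']^{d-2}$, and Claim~\ref{clm:pc-diff} handles the boundary $p=[m_d,\ell']^{d-2}$. No multi-term PCR needs to be tracked through the projection. With this adjustment your argument is complete and matches what the paper leaves implicit.
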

Our next lemma will be useful in the next section.
\begin{lemma}
\label{lmm:side}
For $n\in\nats$ with $d$-PCR 
$n = [m_d,{\ell}_d]^d + \cdots + [m_{c},{\ell}_{c}]^{c}$, then 
$\llbracket n\rrbracket^d\subseteq {\nats}^d$ always has a side $S$
with $|S| \geq \frac{n}{m_d+1}$. If further 
${\ell}_d<d-1$ then every side 
$S = \pi_{\hat{\imath}}(\llbracket n\rrbracket^d)$ where 
$i\in\{{\ell}_d+2,\ldots,d\}$ satisfies $|S|\geq \frac{n}{m_d}$.
\end{lemma}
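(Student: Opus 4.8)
The plan is to work directly from the $d$-PCR of $n$ and the recursive construction~(\ref{eqn:cubicle-rec}) of $\llbracket n\rrbracket^d$. First I would recall from Claim~\ref{clm:side} that each side $S_i := \pi_{\hat\imath}(\llbracket n\rrbracket^d)$ is itself a $(d-1)$-cubicle, so its cardinality is exactly the number of points of $\llbracket n\rrbracket^d$ with $i$-th coordinate equal to $1$ — equivalently, by full nestedness, the largest slice perpendicular to the $i$-axis. The key quantitative input is that $m_d = \lfloor\sqrt[d]{n}\rfloor$, so $\llbracket n\rrbracket^d$ is sandwiched between the $d$-cube $[m_d]^d$ and the pseudo-cube $\llbracket m_d,\ell_d\rrbracket^d \subseteq \llbracket n\rrbracket^d \subseteq \llbracket m_d+1,0\rrbracket^d = [m_d+1]^d$ region; in particular every slice perpendicular to an axis has at most $(m_d+1)$ points along that axis after nesting, which is the mechanism forcing some side to be large.

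For the first assertion, I would argue by a pigeonhole/averaging step: since $\llbracket n\rrbracket^d$ is fully nested, along the $d$-th axis it occupies coordinates $1,\dots,|\pi_d(\llbracket n\rrbracket^d)|$, and from $\llbracket n\rrbracket^d \subseteq [m_d+1]^d$ (use that $n < [m_d,d-1+1]^d$ wait — more precisely $n < (m_d+1)^d$) we get $|\pi_d(\llbracket n\rrbracket^d)| \le m_d+1$. The $|\pi_d(\llbracket n\rrbracket^d)|$ slices perpendicular to the $d$-axis partition all $n$ points, so the largest one has at least $n/(m_d+1)$ points; but the largest such slice is exactly the side $S = \pi_{\hat d}(\llbracket n\rrbracket^d)$ by full nestedness. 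Hence $|S| \ge n/(m_d+1)$.

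For the second assertion, suppose $\ell_d < d-1$ and fix $i \in \{\ell_d+2,\dots,d\}$. Unwinding~(\ref{eqn:cubicle-rec}): the first block $\llbracket m_d,\ell_d\rrbracket^d = [m_d+1]^{\ell_d}\times[m_d]^{d-\ell_d}$ has $i$-th coordinate (since $i > \ell_d$) ranging only over $[m_d]$, and the lifting $\lambda_{\ell_d+1;m_d+1}$ places the remaining points in the hyperplane $x_{\ell_d+1} = m_d+1$ without touching coordinate $i$ beyond what the $(d-1)$-cubicle $\llbracket n - [m_d,\ell_d]^d\rrbracket^{d-1}$ allows; I would check inductively that this recursive remainder also lives within $[m_d]$ in the $i$-th coordinate (its leading PCR term has parameter $m_{d-1} \le m_d$, and crucially $(m_{d-1},\ell_{d-1}) < (m_d,\ell_d)$ keeps the $i$-range bounded by $m_d$ — this is where I'd need to be careful, tracking which coordinate directions the successive liftings ``open up''). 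Granting that $|\pi_i(\llbracket n\rrbracket^d)| \le m_d$, the same averaging argument over the $\le m_d$ slices perpendicular to the $i$-axis, together with full nestedness identifying the largest slice as the side $S_i$, gives $|S_i| \ge n/m_d$.

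The main obstacle is the bookkeeping in the second part: one must verify precisely that for $i \ge \ell_d+2$ no point of $\llbracket n\rrbracket^d$ has $i$-th coordinate exceeding $m_d$. This requires unfolding the recursion~(\ref{eqn:cubicle-rec}) one level at a time and using Observation~\ref{ops:PCR-recursive} to control the PCR of each remainder, checking that the lifting directions $\ell_d+1, \ell_{d-1}+1, \dots$ together with the decreasing lexicographic condition on the $(m_j,\ell_j)$ never push coordinate $i$ past $m_d$. Once that structural claim is in hand, both inequalities follow from the elementary slice-averaging argument above.
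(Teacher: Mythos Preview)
Your approach is correct and is essentially the paper's: bound the extent $|\pi_i(\llbracket n\rrbracket^d)|$ of the cubicle along axis $i$, then use full nestedness so that the $|\pi_i|$ slices perpendicular to axis $i$ partition the $n$ points with the bottom slice (the side) the largest, giving $|S_i|\ge n/|\pi_i|$.

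The one place you diverge is in what you call ``the main obstacle'': verifying $|\pi_i(\llbracket n\rrbracket^d)|\le m_d$ for $i\ge \ell_d+2$ by recursively unfolding~(\ref{eqn:cubicle-rec}) and tracking lifting directions. That works, but it is unnecessary bookkeeping. The paper gets this in one stroke: since $n<[m_d,\ell_d+1]^d$, Proposition~\ref{prp:cubicle}(2) gives
\[
\llbracket n\rrbracket^d \subseteq \llbracket m_d,\ell_d+1\rrbracket^d
=[m_d+1]^{\ell_d+1}\times[m_d]^{\,d-\ell_d-1},
\]
so for $i\in\{\ell_d+2,\ldots,d\}$ the $i$-th coordinate already lies in $[m_d]$, i.e.\ $|\pi_i(\llbracket n\rrbracket^d)|\le m_d$ (and $=m_d$ since $\llbracket m_d,\ell_d\rrbracket^d\subseteq\llbracket n\rrbracket^d$). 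The same containment with $\llbracket n\rrbracket^d\subseteq[m_d+1]^d$ handles the first assertion for every axis, not just $i=d$. So your recursive tracking of $(m_j,\ell_j)$ and the lifting directions can be replaced by a single invocation of Proposition~\ref{prp:cubicle}(2).
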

\begin{proof}
Since $\llbracket n\rrbracket^d\subseteq \llbracket m_d+1,0\rrbracket^d$
and $\llbracket n\rrbracket^d$ is fully nested we clearly 
have $|\pi_{\hat{\imath}}(\llbracket n\rrbracket^d)|\geq \frac{n}{m_d+1}$.

If further ${\ell}_d<d-1$, then for each 
$i\in\{{\ell}_d+2,\ldots,d\}\neq\emptyset$ we have 
$|\pi_i(\llbracket n\rrbracket^d)| = m_d$ and since
$\llbracket n\rrbracket^d$ is fully nested we therefore have
$|\pi_{\hat{\imath}}(\llbracket n\rrbracket^d)|\geq \frac{n}{m_d}$.
\end{proof}
Our ultimate goal is to show that the $d$-cubicles are sets
achieving the most edges among induced graphs in ${\nats}^d$.
\begin{definition}
\label{def:F}
For $n\in\nats$ let $F_d(n)$ be the number of edges 
that the $d$-cubicle $\llbracket n\rrbracket^d$ induces in ${\nats}^d$,
that is $F_d(n) := E_d(\llbracket n\rrbracket^d)$.
\end{definition}
Our goal is therefore to show that $E_d(n) = F_d(n)$, although 
$\llbracket n\rrbracket^d$ is by no means the unique fully nested
configuration in ${\nats}^d$ yielding the maximum number $E_d(n)$ 
of edges. The rest of this current section and the following next section
will be devoted to obtain this goal.

In the same fashion as we derived
(\ref{eqn:rec-exact2}) we get by Claim~\ref{clm:pc-diff}
the following recursion for each $\ell\in\{0,1,\ldots,d\}$.
\begin{equation}
\label{eqn:F(pc)-rec}
F_d([m,\ell+1]^d) = F_d([m,\ell]^d) + F_{d-1}([m,\ell]^{d-1}) + [m,\ell]^{d-1}.
\end{equation}
Likewise, for $n\in\nats$ with $d$-PCR 
$n = [m_d,{\ell}_d]^d + [m_{d-1},{\ell}_{d-1}]^{d-1} + \cdots 
+ [m_{c},{\ell}_{c}]^{c}$,
we get by the recursive definition of $\llbracket n\rrbracket^d$
and Definition~\ref{def:F} that
\begin{equation}
\label{eqn:F(n)-rec}
F_d(n) = F_d([m_d,\ell_d]^d) + F_{d-1}(n-[m_d,\ell_d]^d) + n-[m_d,\ell_d]^d.
\end{equation}
By Observation~\ref{ops:PCR-recursive} and (\ref{eqn:F(n)-rec}) we obtain
recursively an expression for $F_d(n)$, namely
\begin{equation}
\label{eqn:F(n)-sum}
F_d(n) = 
\sum_{i=c}^d \left(F_i([m_i,{\ell}_i]^i) + (d-i)[m_i,{\ell}_i]^i]\right),
\end{equation}
where for each pseudo cube we again obtain recursively by 
(\ref{eqn:F(pc)-rec}) that
\begin{equation}
\label{eqn:F(pc)-exactl}
F_d([m,\ell]^d) = d[m,\ell]^d - \ell[m,\ell-1]^{d-1} - (d-\ell)[m,\ell]^{d-1},
\end{equation}
for $\ell\in\{0,1,\ldots,d-1\}$. Note that for $\ell\in\{0,d-1\}$
then (\ref{eqn:F(pc)-exactl}) yields a valid formula (one with $m$ 
and the other with $m+1$), which by itself can be verified by induction
using (\ref{eqn:F(pc)-rec}) as well. Hence, (\ref{eqn:F(pc)-exactl}) yields
an explicit formula for $F_d(n)$ for every pseudo $d$-cubic $n$.

{\sc Remarks:} (i) The formula (\ref{eqn:F(pc)-exactl})
for $F_d([m,\ell]^d)$ can also be obtained from 
Observation~\ref{obs:exact-edge-count}.
(ii) Note that (\ref{eqn:F(pc)-exactl}) for $\ell\in\{0,d-1\}$ matches the
upper bound given in (\ref{eqn:n-int-tight}) for $n = [m,0]^d$, 
which shows that $F_d(n) = E_d(n)$ for every $d$-power of an integer $n$,
something already stated clearly in Proposition~\ref{prp:n-ass-tight}.

\vspace{3 mm}

From (\ref{eqn:F(n)-sum}) and (\ref{eqn:F(pc)-exactl}) 
we then get the following explicit formula for $F_d(n)$.
\begin{observation}
\label{obs:F-formula}
For $n\in\nats$ with the above $d$-PCR we then have 
$F_d(n) = dn - \delta_d(n)$ where the discrepancy is given by 
\[
\delta_d(n) = \sum_{i=c}^d
\left(\ell_i[m_i,\ell_i-1]^{i-1} + (i-\ell_i)[m_i,\ell_i]^{i-1}\right).
\]
\end{observation}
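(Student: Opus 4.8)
The plan is to prove the Observation by a single algebraic manipulation, combining the two recursive facts already in hand: the telescoped expression (\ref{eqn:F(n)-sum}) that writes $F_d(n)$ as a sum over the pseudo-cubic summands of the $d$-PCR of $n$, and the closed form (\ref{eqn:F(pc)-exactl}) that evaluates $F_i$ at a single pseudo $i$-cubic. No new combinatorics is needed; it is purely a matter of substituting the latter into the former and collecting terms.

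Concretely, first I would take (\ref{eqn:F(pc)-exactl}) in dimension $i$, that is $F_i([m_i,\ell_i]^i) = i[m_i,\ell_i]^i - \ell_i[m_i,\ell_i-1]^{i-1} - (i-\ell_i)[m_i,\ell_i]^{i-1}$, and plug it into (\ref{eqn:F(n)-sum}) term by term. Within the $i$-th summand the two contributions $i[m_i,\ell_i]^i$ (from $F_i$) and $(d-i)[m_i,\ell_i]^i$ (the explicit correction in (\ref{eqn:F(n)-sum})) add up to $d[m_i,\ell_i]^i$, so after regrouping one obtains
\[
F_d(n) = \sum_{i=c}^d d[m_i,\ell_i]^i - \sum_{i=c}^d\left(\ell_i[m_i,\ell_i-1]^{i-1} + (i-\ell_i)[m_i,\ell_i]^{i-1}\right).
\]
The second sum is precisely $\delta_d(n)$ by its definition, while the first sum equals $d\sum_{i=c}^d[m_i,\ell_i]^i = dn$ because $\sum_{i=c}^d[m_i,\ell_i]^i$ is the $d$-PCR of $n$ (Proposition~\ref{prp:!-pc-rep}). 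This yields $F_d(n) = dn - \delta_d(n)$, as claimed.

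The only genuinely delicate points — and the closest thing here to an obstacle — are the range conventions for the index $\ell_i$, which I would address with a short remark. For every summand of the $d$-PCR one has $\ell_i \in \{0,1,\ldots,i-1\}$, so (\ref{eqn:F(pc)-exactl}) is being applied in the legitimate range in each dimension $i$; when $\ell_i = 0$ the symbol $[m_i,-1]^{i-1}$ is read formally via its defining algebraic expression (as already allowed after Claim~\ref{clm:pc-diff}) and is harmless because it is multiplied by $\ell_i = 0$; and when $\ell_i = i-1$ the two algebraic forms of (\ref{eqn:F(pc)-exactl}) coincide, as noted there. If one prefers to run the sum over the full range $i \in \{1,\ldots,d\}$ instead of $i \in \{c,\ldots,d\}$, the padding convention $m_i = \ell_i = 0$ for $i < c$ (so that $[0,0]^i = 0$) makes every term vanish and changes nothing. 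Beyond these bookkeeping remarks there is nothing further to do.
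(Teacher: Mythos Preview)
Your argument is correct and is exactly the approach the paper takes: it derives the Observation by substituting the closed form (\ref{eqn:F(pc)-exactl}) for each $F_i([m_i,\ell_i]^i)$ into the telescoped sum (\ref{eqn:F(n)-sum}) and regrouping, just as you do. Your extra remarks on the $\ell_i=0$ and padding conventions are accurate bookkeeping that the paper leaves implicit.
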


We now prove some important properties of the function $F_d$. 
In order to do that we need to introduce some notation.

\paragraph{Notation:} Let $n\in\nats$ with $d$-PCR
$n = [m_d,{\ell}_d]^d + [m_{d-1},{\ell}_{d-1}]^{d-1} + \cdots + 
[m_{c},{\ell}_{c}]^{c}$. 
(i) Let $[n]^d_{-} := [m_d,{\ell}_d]^d$ 
denote the largest pseudo cubic $\leq n$, 
so $n = [n]^d_{-} + n'$ where 
$n' = [m_{d-1},{\ell}_{d-1}]^{d-1} + \cdots 
+ [m_{c},{\ell}_{c}]^{c} < [m_d,{\ell}_d]^{d-1}$,
and 
(ii) let $[n]^d_{+}$ 
denote the smallest pseudo cubic $>n$, so for each 
${\ell}_d\in\{0,1,\ldots,d-1\}$
\[
[n]^d_{+} := [m_d,{\ell}_d + 1]^d.
\]
Note that $[n]^d_{-}$ and $[n]^d_{+}$ are consecutive pseudo cubics
and $[n]^d_{-}\leq n < [n]^d_{+}$. 
(iii) Let
\[
[n]^{d-1}_{\Delta} := [n]^d_{+} - [n]^d_{-},
\]
so $[n]^{d-1}_{\Delta} = [m_d,{\ell}_d]^{d-1}$ in terms of the 
$d$-PCR of $n$ above. Since $m_d = \lfloor\sqrt[d]{n}\rfloor$
in terms of $n$ and $d$ alone, we obtain by partitioning 
$\llbracket n\rrbracket^d$ into ``slices''
of height one and order $[n]^{d-1}_{\Delta}$ 
along the $({\ell}_d + 1)$-th coordinate that
\begin{equation}
\label{eqn:F(pc)-sliced}
F_d([n]^d_{-}) =  \lfloor\sqrt[d]{n}\rfloor 
F_{d-1}([n]^{d-1}_{\Delta}) + 
(\lfloor\sqrt[d]{n}\rfloor -1)[n]^{d-1}_{\Delta},
\end{equation}
and hence for each $i\in \{0,1,\ldots, \lfloor\sqrt[d]{n}\rfloor\}$
we then obtain by (\ref{eqn:F(n)-rec}) and (\ref{eqn:F(pc)-sliced}) 
that
\begin{equation}
\label{eqn:F(n)-i}
F_d(n) = F_{d-1}(n') + n' + i(F_{d-1}([n]^{d-1}_{\Delta}) + [n]^{d-1}_{\Delta})
+ (\lfloor\sqrt[d]{n}\rfloor -i)F_{d-1}([n]^{d-1}_{\Delta})
+ (\lfloor\sqrt[d]{n}\rfloor -i-1)[n]^{d-1}_{\Delta},
\end{equation}
where $n' = n - [n]^d_{-} = n - [m_d,{\ell}_d]^d$. 
(iv) Needless to say we can recursively define
\begin{eqnarray*}
[n]^d_{1\/+}    & := & [n]^d_{+}, \\
{[n]}^d_{(i+1)\/+} & := & [[n]^d_{i\/+}]^d_{+}, \mbox{ for }i\geq 1
\end{eqnarray*}
thereby obtaining a strictly increasing sequence of
consecutive pseudo $d$-cubics
\[
[n]^d_{+} < [n]^d_{2\/+} < \cdots < [n]^d_{i\/+} < \cdots,
\]
the unique such sequence that contains every pseudo $d$-cubic strictly
larger than $n$. Also, this can be done in the negative
direction as well to obtain 
$[n]^d_{-} > [n]^d_{2\/-} > \cdots > [n]^d_{i\/-} > \cdots$, the unique
sequence containing $d$-cubics less than or equal to $n$. However, here
the recursion is slightly different, as $[n]_{-}^d$ is the largest 
pseudo cubic $\leq n$ as suppose to $< n$.
\begin{eqnarray*}
[n]^d_{1\/-}    & := & [n]^d_{-}, \\
{[n]}^d_{(i+1)\/-} & := & [[n]^d_{i\/-} - 1]^d_{-}, \mbox{ for }i\geq 1.
\end{eqnarray*}

Let $d,n\in\nats$ be fixed and consider the following statements. 
\begin{quote}
$\mathbf{P}(d,n)$ : $F_d(n_1) + F_d(n_2) \leq F_d([m,\ell]^d) + F_d(n')$,
whenever $n_1+n_2 = [m,\ell]^d + n' = n$ and $n_1,n_2\leq [m,\ell]^d$.
\end{quote}
\begin{quote}
$\mathbf{P}'(d,n)$ : $F_d(n_1) + F_d(n_2) \leq F_d([n_1]^d_{+}) + F_d(n')$,
whenever $n_1+n_2 = [n_1]^d_{+} + n' = n$ and $n_2\leq n_1$.
\end{quote}
For a fixed $d,n\in\nats$ we clearly have the implication
$\mathbf{P}(d,n)\Rightarrow \mathbf{P}'(d,n)$.
We now briefly argue the reverse implication
$\mathbf{P}'(d,n)\Rightarrow \mathbf{P}(d,n)$.

Assume $\mathbf{P}'(d,n)$ and let $n_1+n_2 = [m,\ell]^d + n' = n$ where 
$n_1,n_2\leq [m,\ell]^d$. With the notation above, there is an finite sequence
\[
[n_1]^d_{+} < \cdots < [n_1]^d_{j\/+} = [m,\ell]^d,
\]
and with repeated use of $\mathbf{P}'(d,n)$ we obtain
\[
F_d([n_1]^d_{i\/+}) + F_d(n^{(i)})\leq F_d([n_1]^d_{(i+1)\/+}) + F_d(n^{(i+1)})
\]
for each $i$, where $[n_1]^d_{i\/+}+n^{(i)} = [n_1]^d_{(i+1)\/+} + n^{(i+1)} = n$,
which yields $F_d(n_1) + F_d(n_2) \leq F_d([m,\ell]^d) + F_d(n')$.
This proves $\mathbf{P}'(d,n)\Rightarrow \mathbf{P}(d,n)$,
and so we have $\mathbf{P}'(d,n)\Leftrightarrow \mathbf{P}(d,n)$.

For $d,n\in\nats$ let our goal be phrased as the following statement.
\begin{quote}
$\mathbf{EF}(d,n)$ : $E_d(n) = F_d(n)$.
\end{quote}
To prove that $\mathbf{EF}(d,n)$ is valid for every $d,n\in\nats$,
we first note that both $\mathbf{EF}(d,n)$ and $\mathbf{P}(d,n)$
are trivially true whenever either $d=1$ or $n=1$. We then
proceed to show that for any $N\in\nats$ 
\begin{equation}
\label{eqn:main-impl}
\mathbf{P}(d,n)\wedge\mathbf{EF}(d,n)\mbox{ for }d+n<N
\Rightarrow
\mathbf{P}'(d,n)\wedge\mathbf{EF}(d,n)\mbox{ for }d+n = N.
\end{equation}
We conclude this section by proving the following implication
\begin{equation}
\label{eqn:1st-impl}
\mathbf{P}(d,n)\wedge\mathbf{EF}(d,n)\mbox{ for }d+n<N
\Rightarrow
\mathbf{P}'(d,n)\mbox{ for }d+n = N.
\end{equation}
The remainder of (\ref{eqn:main-impl}) will be proved
in the following section.

To prove (\ref{eqn:1st-impl}) let $d,n,n_1,n_2,N\in\nats$ and $n'\geq 0$
be such that $d+n=N$, $n_2\leq n_1$, and $n_1+n_2 = [n_1]^d_{+} + n' = n$. 
Write $n_1 = [n_1]^d_{-} + n_1'$, $n_2 = [n_2]^d_{-} + n_2'$, and 
$[n_1]^d_{+} =  [n_1]^d_{-} +  [n_1]^{d-1}_{\Delta}$. 
By (\ref{eqn:F(pc)-rec}) and (\ref{eqn:F(n)-rec}) we get 
\begin{eqnarray*}
F_d([n_1]^d_{+}) & = & F_d([n_1]^d_{-}) + F_d([n_1]^{d-1}_{\Delta}) 
+ [n_1]^{d-1}_{\Delta}, \\
F_d(n_1) & = & F_d([n_1]^d_{-}) + F_{d-1}(n_1') + n_1'.
\end{eqnarray*}
Let $i\in \{0,1,\ldots, \lfloor \sqrt[d]{n_2}\rfloor\}$
be such that 
\[
(i-1)[n_2]^{d-1}_{\Delta} + n_2' + n_1' < [n_1]^{d-1}_{\Delta} \leq
i[n_2]^{d-1}_{\Delta} + n_2' + n_1'.
\]
Such an $i$ exists: since $n_2\leq n_1$ and $n_1+n_2 = [n_1]^d_{+}+n'$
where $n'\geq 0$, we get by subtracting $[n_1]_{-}^d$ from each side
and rewriting $n_2$, that 
\begin{eqnarray*}
n_2' + \lfloor\sqrt[d]{n_2}\rfloor[n_2]^{d-1}_{\Delta} + n_1' 
 & = & n_2 + n_1' \\ 
 & = & n_2 + (n_1 - [n_1]_{-}^d) \\
 & = & n' + ([n_1]_{+}^d - [n_1]_{-}^d) \\
 & = & n' + [n_1]^{d-1}_{\Delta}\\ 
 &\geq & [n_1]^{d-1}_{\Delta}. 
\end{eqnarray*}
Hence, we have $i[n_2]^{d-1}_{\Delta} + n_2' + n_1' = [n_1]^{d-1}_{\Delta}+n''$
where $0\leq n''<[n_2]^{d-1}_{\Delta}$. Note that if $i=0$
then $n_1'+n_2'\geq [n_1]^{d-1}_{\Delta}\geq [n_2]^{d-1}_{\Delta}$
and so $(i-1)[n_2]^{d-1}_{\Delta} + n_2' + n_1'\geq 0$.
By (\ref{eqn:F(n)-i}) we then
get 
\[
F_d(n_2) =  
  i(F_{d-1}([n_2]^{d-1}_{\Delta}) + [n_2]^{d-1}_{\Delta}) + 
  (\lfloor\sqrt[d]{n_2}\rfloor -i)F_{d-1}([n_2]^{d-1}_{\Delta}) + 
  (\lfloor\sqrt[d]{n_2}\rfloor -i-1)[n_2]^{d-1}_{\Delta} + 
  F_{d-1}(n_2') + n_2'
\]
and we then obtain 
\begin{eqnarray}
& & F_d(n_2) + F_d(n_1) \nonumber \\
& = & \left\{ 
  i(F_{d-1}([n_2]^{d-1}_{\Delta}) + [n_2]^{d-1}_{\Delta}) +
  (\lfloor\sqrt[d]{n_2}\rfloor -i)F_{d-1}([n_2]^{d-1}_{\Delta}) + 
  (\lfloor\sqrt[d]{n_2}\rfloor -i-1)[n_2]^{d-1}_{\Delta} + 
  F_{d-1}(n_2') + n_2'
\right\} \nonumber \\
&  & + F_d([n_1]^d_{-}) + F_{d-1}(n_1') + n_1' \nonumber \\
 & = & 
\left\{
  iF_{d-1}([n_2]^{d-1}_{\Delta}) + F_{d-1}(n_2') +  F_{d-1}(n_1')
\right\} + 
\left\{
  i[n_2]^{d-1}_{\Delta} + n_2' + n_1'
\right\} + F_d([n_1]^d_{-}) + G(n_2,i)
\label{eqn:last-expression}
\end{eqnarray}
where 
\[
G(n_2,i):=  
(\lfloor\sqrt[d]{n_2}\rfloor -i)F_{d-1}([n_2]^{d-1}_{\Delta}) + 
(\lfloor\sqrt[d]{n_2}\rfloor -i-1)[n_2]^{d-1}_{\Delta}.
\]
Note that $G(n_2,i)$ can be interpreted geometrically
as the number of edges induced by a rectangular ``box'' in ${\nats}^d$ 
of with base $[n_2]^{d-1}_{\Delta}$ and height $\lfloor\sqrt[d]{n_2}\rfloor -i$,
that is, a box consisting of $\lfloor\sqrt[d]{n_2}\rfloor -i$ copies
of $(d-1)$-cubicles of order $[n_2]^{d-1}_{\Delta}$ stacked one on top
of the other.

By our induction hypothesis, then $F_{d-1} = E_{d-1}$ in 
(\ref{eqn:last-expression}), and recall that 
$i[n_2]^{d-1}_{\Delta} + n_2' + n_1' = [n_1]^{d-1}_{\Delta}+n''$, so we obtain
\begin{eqnarray}
F_d(n_2) + F_d(n_1) & = & 
\left\{
  iE_{d-1}([n_2]^{d-1}_{\Delta}) + E_{d-1}(n_2') +  E_{d-1}(n_1')
\right\} \nonumber \\
\label{eqn:n1n2}
 &   &  + \left\{ [n_1]^{d-1}_{\Delta}+n'' \right\} + F_d([n_1]^d_{-}) + G(n_2,i).
\end{eqnarray}

If $i=0$, we get by induction hypothesis that
\[
iE_{d-1}([n_2]^{d-1}_{\Delta}) + E_{d-1}(n_2') +  E_{d-1}(n_1')
 =     E_{d-1}(n_2') +  E_{d-1}(n_1') 
  \leq E_{d-1}([n_1]^{d-1}_{\Delta}) + E_{d-1}(n'').
\]

If $i>0$, we get since $E_{d-1}$ is super-additive that
\[
iE_{d-1}([n_2]^{d-1}_{\Delta}) + E_{d-1}(n_2') +  E_{d-1}(n_1')\leq 
E_{d-1}((i-1)[n_2]^{d-1}_{\Delta} + n_2' + n_1') + E_{d-1}([n_2]^{d-1}_{\Delta}),
\]
which by inductive hypothesis is 
$\leq E_{d-1}([n_1]^{d-1}_{\Delta}) + E_{d-1}(n'')$.
With this in mind, and that $E_{d-1} = F_{d-1}$,
we obtain from (\ref{eqn:n1n2}) that 
\begin{equation}
\label{eqn:n1n''}
F_d(n_1) + F_d(n_2) \leq 
F_d([n_1]^d_{-}) +
F_{d-1}([n_1]^{d-1}_{\Delta}) + 
[n_1]^{d-1}_{\Delta} +
F_{d-1}(n'') +
n'' +
G(n_2,i).
\end{equation}
Note that by definition of $n''$ we have
$i[n_2]^{d-1}_{\Delta} + n_2' + n_1' = [n_1]^{d-1}_{\Delta}+n''$,
and since $n_1+n_2 = [n_1]^d_{+} + n'$, we have 
$n' = (\lfloor\sqrt[d]{n_2}\rfloor -i)[n_2]^{d-1}_{\Delta} + n''$.
We now consider two cases.

{\sc First case $i=\lfloor\sqrt[d]{n_2}\rfloor$:}
In this case 
$G(n_2,i) = G(n_2,\lfloor\sqrt[d]{n_2}\rfloor) = -[n_2]^{d-1}_{\Delta}$,
and (\ref{eqn:n1n''}) becomes
\begin{eqnarray*}
F_d(n_1) + F_d(n_2) 
  & \leq &  
\left\{
  F_d([n_1]^d_{-}) +
  F_{d-1}([n_1]^{d-1}_{\Delta}) + 
  [n_1]^{d-1}_{\Delta} +
\right\} +
F_{d-1}(n'') +
n'' - [n_2]^{d-1}_{\Delta} \\ 
  & =    & F_d([n_1]^d_{+}) + F_{d-1}(n'') + n'' - [n_2]^{d-1}_{\Delta} \\
  & \leq & F_d([n_1]^d_{+}) + F_{d-1}(n''), 
\end{eqnarray*}
which is $\leq F_d([n_1]^d_{+}) + F_d(n'')$, since 
$F_{d-1}(n'') = E_{d-1}(n'') \leq E_d(n'') = F_d(n'')$ by induction
hypothesis. This proves $\mathbf{P}'(d,n)$ in this case 
since $n' = n''$.

{\sc Second case $i < \lfloor\sqrt[d]{n_2}\rfloor$:}
Since $n''< [n_2]^{d-1}_{\Delta}$, one can put $n''$ points
on one $[n_2]^{d-1}_{\Delta}$-side of the $G(n_2,i)$-box mentioned
here above, thereby obtaining $G(n_2,i) + F_{d-1}(n'') + n''$ edges.
Hence we have 
\[
G(n_2,i) + F_{d-1}(n'') + n'' \leq 
E_d((\lfloor\sqrt[d]{n_2}\rfloor -i)[n_2]^{d-1}_{\Delta} + n''), 
\]
and (\ref{eqn:n1n''}) yields
\begin{eqnarray*}
F_d(n_1) + F_d(n_2) 
  & \leq &  
\left\{
  F_d([n_1]^d_{-}) +
  F_{d-1}([n_1]^{d-1}_{\Delta}) + 
  [n_1]^{d-1}_{\Delta} +
\right\} +
E_d((\lfloor\sqrt[d]{n_2}\rfloor -i)[n_2]^{d-1}_{\Delta} + n'') \\
  & =    & F_d([n_1]^d_{+}) + 
F_d((\lfloor\sqrt[d]{n_2}\rfloor -i)[n_2]^{d-1}_{\Delta} + n'')
\end{eqnarray*}
which proves $\mathbf{P}'(d,n)$ in this case as well, 
since $n' = (\lfloor\sqrt[d]{n_2}\rfloor -i)[n_2]^{d-1}_{\Delta} + n''$ 
here in this case. This completes the proof of (\ref{eqn:1st-impl}).
We complete the proof of (\ref{eqn:main-impl}) in the following
section.

\section{The final steps in the proof of $E = F$}
\label{sec:E=F}

In this section we prove the following implication
\[
\mathbf{P}(d,n)\wedge\mathbf{EF}(d,n)\mbox{ for }d+n<N
\Rightarrow
\mathbf{P}'(d,n)\wedge\mathbf{EF}(d,n)\mbox{ for }d+n = N.
\]
From previous section we already have (\ref{eqn:1st-impl}), so it
suffices to verify the following implication
\begin{equation}
\label{eqn:E=F}
\mathbf{P}(d,n)\wedge\mathbf{EF}(d,n)\mbox{ for }d+n<N
\Rightarrow
\mathbf{EF}(d,n)\mbox{ for }d+n = N.
\end{equation}
Before we delve into that, we need a property
of fully nested sets in general.

Let $S\subseteq {\nats}^d$ be a fully nested set with $|S|=n$.
For each $i\in [d]$ let $h_i = |\pi_i(S)|$ be the {\em height} 
of $S$ along $i$-th axis, and let  
$A_i = |\{ \tilde{x}\in S : x_i = h_i\}|$ the {\em area}
of the top layer of $S$ along the $i$-th axis.
Since $S$ is fully nested we have $n = |S| \geq h_iA_i$
for each $i$ and hence 
\begin{eqnarray*}
\frac{\min(A_1,\ldots,A_d)}{n} & =    & 
  \min\left(\frac{A_1}{n},\ldots, \frac{A_d}{n}\right) \\
  & \leq &
\min\left(\frac{A_1}{h_1A_1},\ldots, \frac{A_d}{h_dA_d}\right) \\
  & =    & \min\left(\frac{1}{h_1},\ldots,\frac{1}{h_d}\right) \\
  & =    & \frac{1}{\max(h_1,\ldots,h_d)}.
\end{eqnarray*}
By Proposition~\ref{prp:n-ass-tight} we can assume that 
$n$ is not a $d$-th power of an integer. Hence, if $n$ has $d$-PCR given by
$n = [m_d,{\ell}_d]^d + [m_{d-1},{\ell}_{d-1}]^{d-1} 
+ \cdots + [m_{c},{\ell}_{c}]^{c}$, then either
$c<d$, or $c=d$ and $1\leq {\ell}_d\leq d-1$. In either case we 
have $m_d < \sqrt[d]{n} < m_d+1$. By definition of $h_i$ we have 
\[
m_d^d < n < h_1\cdots h_d 
\]
and hence there is at least one $i$ with $h_i \geq m_d+1$,
and so $\max(h_1,\ldots,h_d) \geq m_d+1$.
From above we then have
\[
\frac{\min(A_1,\ldots,A_d)}{n} \leq \frac{1}{m_d+1}.
\]
By symmetry we may assume $\min(A_1,\ldots,A_d) = A_d$
and so $A_d \leq \frac{n}{m_d+1}$.
From this we get the following.
\begin{observation}
\label{obs:n2-small}
Let $S\subseteq {\nats}^d$ be a fully nested set with $|S|=n$.
Then there is a suitable permutation of the coordinates of $S$ 
and a partition $S = S_1\cup S_2$ such that 
(\ref{eqn:rec-exact2})
holds and where $n_2 = |S_2| = A_d \leq \frac{n}{m_d+1}$.
\end{observation}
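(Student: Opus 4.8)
The plan is to use the full nestedness of $S$ to show that some ``top layer'' of $S$ is small, and then to take the corresponding side cut, for which the identity (\ref{eqn:rec-exact2}) from Section~\ref{sec:rec-ineq} holds automatically.

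First I would record, for each $i\in[d]$, that $S$ is $i$-nested, so the projected slices $\pi_{\hat{\imath}}(S\cap\pi_i^{-1}(1))\supseteq\pi_{\hat{\imath}}(S\cap\pi_i^{-1}(2))\supseteq\cdots$ are nested; since $\pi_{\hat{\imath}}$ is injective on a single slice, each of the $h_i$ nonempty slices has cardinality at least $A_i$, and summing over the $h_i$ slices gives $n=|S|\geq h_iA_i$. Hence $A_i/n\leq 1/h_i$ for every $i$, and therefore
\[
\frac{\min(A_1,\ldots,A_d)}{n}\;\leq\;\frac{1}{\max(h_1,\ldots,h_d)}.
\]

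Next I would bound $\max_i h_i$ from below. By Proposition~\ref{prp:n-ass-tight} we may assume $n$ is not a $d$-th power, so with $m_d=\floor{\sqrt[d]{n}}$ we have $m_d^d<n$; since $S\subseteq[h_1]\times\cdots\times[h_d]$ we also have $n\leq h_1\cdots h_d$, so if every $h_i\leq m_d$ then $n\leq m_d^d<n$, a contradiction. Thus $\max_i h_i\geq m_d+1$, and combining this with the displayed inequality gives $\min_i A_i\leq n/(m_d+1)$. Because permuting coordinates preserves full nestedness, I would relabel the axes so that $A_d=\min_i A_i$, so that $A_d\leq n/(m_d+1)$.

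Finally I would take the cut at $k=h=|\pi_d(S)|$, for which $S_2=S_{2;h}$ is exactly the top layer of $S$ along the $d$-th axis --- a $(d-1)$-dimensional side of $S$ of order $n_2=|S_2|=A_d$ (and $\pi_{\hat{d}}(S_2)$ has the same order). The derivation of (\ref{eqn:rec-exact2}) in Section~\ref{sec:rec-ineq} uses only that $S$ is fully nested and that the cut is taken at the topmost layer, so it applies here and yields (\ref{eqn:rec-exact2}) with $n_2=A_d\leq n/(m_d+1)$, as required. The argument is essentially routine; the only step needing a little care is justifying that $n$ may be assumed not to be a $d$-th power --- which is legitimate since $\mathbf{EF}(d,n)$ is already known for $d$-th powers by Proposition~\ref{prp:n-ass-tight} --- and noticing that it is precisely the inequality $n\leq h_1\cdots h_d$ (not a reversed or strict variant) that one needs to force $\max_i h_i\geq m_d+1$.
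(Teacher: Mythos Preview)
Your proof is correct and follows essentially the same route as the paper: the paper also uses full nestedness to get $n\geq h_iA_i$, deduces $\min_i A_i/n\leq 1/\max_i h_i$, invokes Proposition~\ref{prp:n-ass-tight} to assume $n$ is not a $d$-th power so that $m_d^d<n\leq h_1\cdots h_d$ forces $\max_i h_i\geq m_d+1$, and then permutes coordinates so that $A_d$ realizes the minimum. Your write-up is in fact slightly more careful in two places: you use the non-strict inequality $n\leq h_1\cdots h_d$ (which is all that is actually available and all that is needed), and you spell out explicitly that the side cut at $k=h_d$ is what yields (\ref{eqn:rec-exact2}).
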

So if $S\subseteq {\nats}^d$ is a fully nested set with $|S|=n$
and where the $d$-PCR is given by
$n = [m_d,{\ell}_d]^d + [m_{d-1},{\ell}_{d-1}]^{d-1} 
+ \cdots + [m_{c},{\ell}_{c}]^{c} = [n]^d_{-} + n'$, then
by~\ref{obs:n2-small} we can
assume there is partition $S = S_1\cup S_2$ with
\[
n_1 = n - n_2 \geq \frac{m_d}{m_d+1}n \geq \frac{m_d}{m_d+1}[n]^d_{-}
\geq [m_d,{\ell}_d-1]^d.
\]
Note, with our convention that $[m_d,0]^d = [m_d-1,d]^d$ and keeping
in mind that $m_d/(m_d+1)\geq (m_1-1)/m_d$, the above
inequality is valid also for ${\ell}_d = 0$. We therefore have 
the following.
\begin{observation}
\label{obs:n1-2poss}
Let $S\subseteq {\nats}^d$ be a fully nested set with $|S|=n$,
where the $d$-PCR is given by
$n = [m_d,{\ell}_d]^d + [m_{d-1},{\ell}_{d-1}]^{d-1} 
+ \cdots + [m_{c},{\ell}_{c}]^{c} = [n]^d_{-} + n'$,
then we can assume 
there is a partition $S = S_1\cup S_2$ such that 
(\ref{eqn:rec-exact2}) holds and where 
$[n_1]^d_{-} = [n]^d_{-}$ or $[n_1]^d_{-} = [n]^d_{2\/-}$.
\end{observation}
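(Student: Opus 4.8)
The plan is to obtain the statement almost immediately from Observation~\ref{obs:n2-small}, the only extra ingredient being an elementary comparison between consecutive pseudo $d$-cubics. First I would apply Observation~\ref{obs:n2-small}: after a suitable permutation of the coordinates there is a partition $S = S_1\cup S_2$ for which (\ref{eqn:rec-exact2}) holds and $n_2 = |S_2|\le \frac{n}{m_d+1}$. Hence $n_1 = n - n_2 \ge \frac{m_d}{m_d+1}\,n$, and since $n\ge [n]^d_{-}$ this gives the starting bound $n_1 \ge \frac{m_d}{m_d+1}[n]^d_{-}$. Note that $n$ may be assumed not to be a $d$-th power by Proposition~\ref{prp:n-ass-tight}, so $m_d\ge 1$ and $[n]^d_{2\/-}\ge 1$, which also guarantees that $n_1$ has a genuine $d$-PCR.

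The second step is to recognise $\frac{m_d}{m_d+1}[n]^d_{-}$ — or, in the boundary case, a lower bound for it — as the pseudo $d$-cubic sitting immediately below $[n]^d_{-}$, namely $[n]^d_{2\/-}$. If $\ell_d\ge 1$, write $[n]^d_{-} = [m_d,\ell_d]^d = (m_d+1)^{\ell_d}m_d^{d-\ell_d}$; then $\frac{m_d}{m_d+1}[m_d,\ell_d]^d = (m_d+1)^{\ell_d-1}m_d^{d-\ell_d+1} = [m_d,\ell_d-1]^d$ exactly, and by Claim~\ref{clm:pc-diff} this is the pseudo cubic immediately preceding $[n]^d_{-}$, i.e. $[n]^d_{2\/-}$. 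If $\ell_d=0$, so $[n]^d_{-} = m_d^d$ and its predecessor is $[m_d-1,d-1]^d = m_d^{d-1}(m_d-1)$, I would instead use the crude inequality $\frac{m_d}{m_d+1}\ge\frac{m_d-1}{m_d}$ (equivalent to $m_d^2\ge m_d^2-1$), together with the convention $[m_d,0]^d=[m_d-1,d]^d$, to conclude $n_1\ge\frac{m_d}{m_d+1}m_d^d\ge m_d^{d-1}(m_d-1) = [n]^d_{2\/-}$. In either case $n_1\ge [n]^d_{2\/-}$.

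The final step is to trap $n_1$ between two consecutive pseudo cubics. On one hand $n_1\ge [n]^d_{2\/-}$, and $[n]^d_{2\/-}$ is itself a pseudo cubic, so the largest pseudo cubic not exceeding $n_1$ satisfies $[n_1]^d_{-}\ge [n]^d_{2\/-}$. On the other hand $n_1\le n$, and $k\mapsto [k]^d_{-}$ is monotone non-decreasing, so $[n_1]^d_{-}\le [n]^d_{-}$. Since $[n]^d_{2\/-}$ is by definition the largest pseudo $d$-cubic strictly below $[n]^d_{-}$, there is no pseudo cubic strictly between these two, and hence the pseudo cubic $[n_1]^d_{-}$ must coincide with one of them: $[n_1]^d_{-} = [n]^d_{-}$ or $[n_1]^d_{-} = [n]^d_{2\/-}$, which is exactly the asserted dichotomy. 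I do not expect a genuine obstacle here; the only point that needs a little care is the $\ell_d=0$ boundary case, where the exact identity of the second step breaks down and must be replaced by the crude estimate $\frac{m_d}{m_d+1}\ge\frac{m_d-1}{m_d}$ read through the stated convention on pseudo cubics.
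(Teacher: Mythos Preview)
Your proposal is correct and follows essentially the same route as the paper: apply Observation~\ref{obs:n2-small} to get $n_1\ge \frac{m_d}{m_d+1}n\ge \frac{m_d}{m_d+1}[n]^d_{-}\ge [n]^d_{2\/-}$, handling the boundary case $\ell_d=0$ via the convention $[m_d,0]^d=[m_d-1,d]^d$ together with $\frac{m_d}{m_d+1}\ge\frac{m_d-1}{m_d}$. Your explicit ``trapping'' argument in the final step is left implicit in the paper, but is exactly what is intended.
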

Consider now an integer partition $n = n_1+n_2$
where $n = [m_d,{\ell}_d]^d + [m_{d-1},{\ell}_{d-1}]^{d-1} 
+ \cdots + [m_{c},{\ell}_{c}]^{c}$ is the $d$-PCR of $n$ 
and $n_2\leq \frac{n}{m_d+1}$. We then have
$n_1\geq \frac{m_d}{m_d+1}n$ and we have the following.
\begin{lemma}
\label{lmm:attached}
If $n=n_1+n_2$ where $n_1\geq \frac{m_d}{m_d+1}n$ and
$n_2\leq \frac{n}{m_d+1}$, then $\llbracket n_1\rrbracket^d$ 
has a side that covers $\llbracket n_2\rrbracket^{d-1}$.
\end{lemma}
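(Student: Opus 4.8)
The plan is to compare the $d$-cubicle $\llbracket n_1\rrbracket^d$ with the $(d-1)$-cubicle $\llbracket n_2\rrbracket^{d-1}$ via their sides, and to show that a suitable side of the former contains a translated copy of the latter. First I would record the relevant sizes: writing the $d$-PCR of $n$ as $n=[m_d,\ell_d]^d+\cdots$, the hypothesis $n_2\le \frac{n}{m_d+1}$ together with $n_1=n-n_2\ge \frac{m_d}{m_d+1}n$ gives $n_1\ge [m_d,\ell_d]^d$ (indeed $n_1 \ge \frac{m_d}{m_d+1}[n]^d_- \ge [m_d,\ell_d-1]^d$, using the convention $[m_d,0]^d=[m_d-1,d]^d$), so that $m_{d}(n_1)\in\{m_d-1,m_d\}$; in either case the first PCR term of $n_1$ has ``base'' at least $m_d-1$, hence $\lfloor\sqrt[d]{n_1}\rfloor\ge m_d-1$, actually I would want $\lfloor\sqrt[d]{n_1}\rfloor \ge m_d$ which follows from $n_1 \ge [m_d,\ell_d-1]^d \ge m_d^d$ when $\ell_d\ge 1$, and the edge case $\ell_d=0$ handled separately. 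The point of these estimates is to control $m_d(n_1)$, the first PCR parameter of $n_1$, since by Lemma~\ref{lmm:side} the cubicle $\llbracket n_1\rrbracket^d$ always has a side $S$ with $|S|\ge \frac{n_1}{m_d(n_1)+1}$.

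Next I would exhibit the side explicitly. By Claim~\ref{clm:side}, every side $\pi_{\hat{\imath}}(\llbracket n_1\rrbracket^d)$ is a $(d-1)$-cubicle, say $\llbracket k_i\rrbracket^{d-1}$ with $k_i=|\pi_{\hat{\imath}}(\llbracket n_1\rrbracket^d)|$. By Proposition~\ref{prp:cubicle}(2), $\llbracket n_2\rrbracket^{d-1}\subseteq\llbracket k_i\rrbracket^{d-1}$ as soon as $n_2\le k_i$, and since $\llbracket k_i\rrbracket^{d-1}$ is (after the appropriate coordinate identification) exactly the relevant side of $\llbracket n_1\rrbracket^d$, this gives the desired covering. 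So the whole statement reduces to finding one index $i$ with $|\pi_{\hat{\imath}}(\llbracket n_1\rrbracket^d)|\ge n_2$. For this I would invoke Lemma~\ref{lmm:side}: writing $m=m_d(n_1)$, there is a side $S$ of $\llbracket n_1\rrbracket^d$ with $|S|\ge \frac{n_1}{m+1}$, and it remains to check $\frac{n_1}{m+1}\ge n_2$. Since $n_1\ge\frac{m_d}{m_d+1}n$ and $n_2\le\frac{1}{m_d+1}n$, we have $n_1\ge m_d n_2$, so it suffices that $m+1\le m_d$, i.e. $m=m_d(n_1)\le m_d-1$; but when $m_d(n_1)=m_d$ this inequality fails, and I would instead use the sharper side bound — the second part of Lemma~\ref{lmm:side}, which gives a side of size $\ge \frac{n_1}{m}$ when $\ell_d(n_1)<d-1$ — together with $n_1\ge m_d n_2$ to conclude $|S|\ge \frac{n_1}{m}=\frac{n_1}{m_d}\ge n_2$.

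The main obstacle is exactly this case analysis on $m_d(n_1)$ and $\ell_d(n_1)$: when $m_d(n_1)=m_d$ and $\ell_d(n_1)=d-1$, both parts of Lemma~\ref{lmm:side} give only $|S|\ge\frac{n_1}{m_d+1}$, which is not obviously $\ge n_2$. In that boundary regime, however, $n_1=[m_d,d-1]^d+\cdots$ is within $[m_d,d-1]^{d-1}=(m_d+1)^{d-1}$ of the perfect power $(m_d+1)^d$, so $n$ itself is close to $(m_d+1)^d$ and $n_2=n-n_1$ is forced to be small — small enough that one can directly verify $n_2\le \frac{n_1}{m_d+1}\cdot\frac{m_d+1}{m_d}$ collapses to $n_2\le\frac{n_1}{m_d}$ after accounting for the slack, or alternatively that $n_2\le (m_d+1)^{d-1}=[n_1]^{d-1}_\Delta$, in which case $\llbracket n_2\rrbracket^{d-1}$ fits inside the (full, side-length $m_d+1$) top side of $\llbracket n_1\rrbracket^d$ by Proposition~\ref{prp:cubicle}(2). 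I would dispatch this last subcase by a short explicit inequality chain rather than appeal to Lemma~\ref{lmm:side}, completing the proof.
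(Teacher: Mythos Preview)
Your plan is correct and is essentially the paper's own argument: both reduce the covering statement to a numerical inequality $n_2\le |\pi_{\hat{\imath}}(\llbracket n_1\rrbracket^d)|$ via Claim~\ref{clm:side} and Proposition~\ref{prp:cubicle}(2), then verify it by case analysis using Lemma~\ref{lmm:side} together with the reformulation $n_2\le n_1/m_d$. The paper organizes the cases as $[n_1]^d_{-}\in\{[n]^d_{-},[n]^d_{2-}\}$ rather than by $(m_d(n_1),\ell_d(n_1))$, but the content is the same; your boundary case $\ell_d(n_1)=d-1$ coincides with the paper's first case at $\ell_d=d-1$, dispatched there by observing that the side $\llbracket m_d,d-1\rrbracket^{d-1}$ (the $x_d=1$ side, not a ``top'' side) has size $(m_d+1)^{d-1}\ge n/(m_d+1)\ge n_2$.
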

\begin{proof}
Note that $n=n_1+n_2$ where $n_2\leq \frac{n}{m_d+1}$
is equivalent to $n=n_1+n_2$ where $n_2\leq n_1/m_d$.
Let the $d$-PCR of $n$ be given by
$n = [m_d,{\ell}_d]^d + [m_{d-1},{\ell}_{d-1}]^{d-1} 
+ \cdots + [m_{c},{\ell}_{c}]^{c}$. By Observation~\ref{obs:n1-2poss}
we have two cases to consider.

{\sc First case:} $[n_1]^d_{-} = [n]^d_{-} = [m_d,{\ell}_d]^d$.
Here $\llbracket n_1\rrbracket^d$ has a side 
$S = \llbracket m_d,{\ell}_d\rrbracket^{d-1}$ and since
$n < [m_d,{\ell}_d+1]^d$ we have $|S| \geq n/(m_d+1)$.
Since $n_2 \leq n/(m_d+1) \leq |S|$, then $S$ covers 
$\llbracket n_2\rrbracket^{d-1}$ by Proposition \ref{prp:cubicle}.

{\sc Second case:} $[n_1]^d_{-} = [n]^d_{2\/-} = [m_d,{\ell}_d-1]^d$.
Here we need to consider two possibilities:

If ${\ell}_d\geq 1$, then $0\leq {\ell}_d-1<d-1$, so by
Lemma~\ref{lmm:side} $\llbracket n_1\rrbracket^d$ has a side
$S$ with $|S|\geq n_1/m_d$. Since $n_2\leq n_1/m_d$ 
then $S$ covers 
$\llbracket n_2\rrbracket^{d-1}$ by Proposition \ref{prp:cubicle}.

If ${\ell}_d = 0$, then $[n_1]^d_{-} = [m_d-1,d-1]^d$, and 
 $\llbracket n_1\rrbracket^d$ has a (bottom) side
$S = \llbracket m_d-1,d-1\rrbracket^{d-1} = \llbracket m_d,0\rrbracket^{d-1}$. 
Since $[m_d,0]^d > n_1$ we have $|S| > n_1/m_d$, so as before
$S$ covers 
$\llbracket n_2\rrbracket^{d-1}$ by Proposition \ref{prp:cubicle}.
\end{proof}

We now consider the two cases based on the above Observation~\ref{obs:n1-2poss}.

{\sc First case:} $[n_1]^d_{-} = [n]^d_{-} = [m_d,{\ell}_d]^d$.
In this case we have for our partition $S = S_1\cup S_2$ that
$n_1\geq [n]^d_{-}$ and hence $n_1 =  [n]^d_{-} + n_1'$
where $n_1'\geq 0$ and 
$n_2 \leq n - [n]^d_{-}$. 
Since $n_2 = n- n_1$, we have by (\ref{eqn:rec-exact2}) and 
induction hypothesis that
\begin{eqnarray}
E_d(n) & =    & E_d(S_1) + E_{d-1}(S_2) + n_2 \nonumber \\
       & \leq &  F_d(n_1) + F_{d-1}(n_2) + n_2 \nonumber \\
       & =    & F_d([n]^d_{-}+n_1') 
                + F_{d-1}(n_2) + n_2 \label{eqn:alpha}
\end{eqnarray}
Here the $d$-PCR of $n_1$ is obtained
by adding $[n]^d_{-}$ to the $d$-PCR of $n_1'$.
With this in mind we obtain by (\ref{eqn:F(n)-rec}) that
\[
F_d([n]^d_{-}+n_1') = F_d([n]^d_{-})
+ F_{d-1}(n_1') + n_1'.
\]
Substituting this expression into (\ref{eqn:alpha})
we then obtain
\[
E_d(n) \leq  F_d([n]^d_{-}) 
           + F_{d-1}(n_1') + F_{d-1}(n_2)
                + n-[n]^d_{-}.
\]
By induction hypothesis and the super-additivity of $E_{d-1}$ we obtain
\begin{eqnarray*}
F_{d-1}(n_1') + F_{d-1}(n_2) 
  & =    & E_{d-1}(n_1') + E_{d-1}(n_2) \\ 
  & \leq & E_{d-1}(n-[n]^d_{-})) \\
  & =    & F_{d-1}(n-[n]^d_{-}),
\end{eqnarray*}
and hence by (\ref{eqn:F(n)-rec}) that 
\[
E_d(n) \leq  F_d([n]^d_{-}) 
 + F_{d-1}(n-[n]^d_{-}) +  n-[n]^d_{-} = F_d(n),
\]
which, by definition of $E_d(n)$, proves (\ref{eqn:E=F}) in this case.

{\sc Second case:} $[n_1]^d_{-} = [n]^d_{2\/-} = [m_d,{\ell}_d-1]^d$ 
(Recall, if ${\ell}_d=0$
then $[m_d,0]^d = [m_d-1,d]^d$ and hence $[n_1]^d_{-} = [m_d-1,d-1]^d$.)
In this case we have for our partition $S = S_1\cup S_2$ that
$[n]^d_{2\/-} \leq n_1 < [n]^d_{-} = [m_d,{\ell}_d]^d$ and
hence $n_1 = [n_1]^d_{-} + n_1'$, where
$0\leq n_1' < [n]^d_{-}-[n]^d_{2\/-} = [m_d,{\ell}_d-1]^{d-1}$, a difference
of two consecutive pseudo $d$-cubics, and hence itself a 
pseudo $(d-1)$-cubic.
By (\ref{eqn:rec-exact2}) and by induction hypothesis 
we have, as in previous case, that
\begin{eqnarray}
E_d(n) & =    & E_d(S_1) + E_{d-1}(S_2) + n_2 \nonumber \\
       & \leq & F_d(n_1) + F_{d-1}(n_2) + n_2 \nonumber \\
       & =    & F_d([n_1]^d_{-} + n_1') + F_{d-1}(n_2) + n_2.\label{eqn:beta}
\end{eqnarray}
As in the previous case the $d$-PCR of $n_1$ is obtained by
adding the $d$-PCR of $n_1'$ to $[n_1]^d_{-}$.
With this in mind we obtain, as before, by (\ref{eqn:F(n)-rec}) that
\[
F_d([n_1]^d_{-}+n_1') = F_d([n_1]^d_{-})
+ F_{d-1}(n_1') + n_1'.
\]
Substituting this expression into (\ref{eqn:beta})
we then obtain
\begin{equation}
\label{eqn:Eleq}
E_d(n) \leq  F_d([n_1]^d_{-}) 
           + F_{d-1}(n_1') + F_{d-1}(n_2)
                + (n - [n]^d_{2\/-}).
\end{equation}
By (\ref{eqn:F(n)-rec}) and  (\ref{eqn:F(pc)-rec}) we have
\begin{equation}
\label{eqn:Feq}
F_d(n) = F_d([n]^d_{2\/-})  
       + F_{d-1}([n]^d_{-}-[n]^d_{2\/-}) + F_{d-1}(n-[n]^d_{-})
       + (n - [n]^d_{2\/-}).
\end{equation}
By (\ref{eqn:Eleq}) and (\ref{eqn:Feq}) we see that $E_d(n)\leq F_d(n)$
can be obtained from the following inequality
\begin{equation}
\label{eqn:2leq2}
 F_{d-1}(n_1') + F_{d-1}(n_2) 
\leq F_{d-1}([n]^d_{-}-[n]^d_{2\/-}) + F_{d-1}(n-[n]^d_{-}).
\end{equation}
We have $n_1' < [n]^d_{-}-[n]^d_{2\/-}$. If
also $n_2 < [n]^d_{-}-[n]^d_{2\/-} $, then (\ref{eqn:2leq2}) follows
from our inductive hypothesis $\mathbf{P}(d-1,n_1'+n_2)$ and so 
we have  (\ref{eqn:E=F}) in this case.

Otherwise we have $n_2 > [n]^d_{-}-[n]^d_{2\/-} = [m_d,{\ell}_d-1]^{d-1}$.
Writing $n_2 = [m_d,{\ell}_d-1]^{d-1} + n_2''$ and noting that
$n_2\leq n/(m_d+1)$ we obtain 
\[
n_2'' \leq \frac{n}{m_d+1} - [m_d,{\ell}_d-1]^{d-1} = \frac{n'}{m_d+1}
< \frac{[m_d,{\ell}_d]^{d-1}}{m_d+1} = [m_d,{\ell}_d-1]^{d-2},
\]
and hence we have 
$[n_2]_{-}^{d-1} = [m_d,{\ell}_d-1]^{d-1} = [n]_{-}^d - [n]_{2\/-}^d$.
By (\ref{eqn:F(n)-rec}) we then have
\[
F_{d-1}(n_2) = F_{d-1}([n_2]_{-}^{d-1}) + F_{d-2}(n_2'') + n_2'',
\]
and hence $E_d(n)\leq F_d(n)$, which can be obtained from 
(\ref{eqn:2leq2}), can therefore be obtained from 
\[
F_{d-1}(n_1') + F_{d-2}(n_2'') + n_2'' \leq F_{d-1}(n').
\]
By induction hypothesis we have $F_{d-1}(n') = E_{d-1}(n')$, and so
(\ref{eqn:2leq2}) is valid if 
\begin{equation}
\label{eqn:n_1'n_2''}
F_{d-1}(n_1') + F_{d-2}(n_2'') + n_2'' \leq E_{d-1}(n').
\end{equation}
Interpreting the quantity on the left of (\ref{eqn:n_1'n_2''})
as the number of edges of induced by a set $S'\subseteq {\nats}^{d-1}$
with $|S'| = n' = n-[n]_{-}^d$, we see it can be realized if
$\llbracket n_1'\rrbracket^{d-1}$ has a side that covers 
$\llbracket n_2''\rrbracket^{d-2}$. In that case we have
\[
F_{d-1}(n_1') + F_{d-2}(n_2'') + n_2'' = F_{d-1}(S')\leq  E_{d-1}(n'),
\]
where $S'\subseteq {\nats}^{d-1}$ is obtained by attaching 
$\llbracket n_2''\rrbracket^{d-2}$ to one of the sides of 
$\llbracket n_1'\rrbracket^{d-1}$. We will now verify
this, and thereby completing the inductive step of (\ref{eqn:E=F}).

Recall the $d$-PCR $n = [m_d,{\ell}_d]^d + [m_{d-1},{\ell}_{d-1}]^{d-1} 
+ \cdots + [m_{c},{\ell}_{c}]^{c} = [n]^d_{-} + n'$.
Since in this case we have $[n_1]^d_{-} = [n]^d_{2\/-} = [m_d,{\ell}_d-1]^d$
and $[n_2]^{d-1}_{-} = [n]^d_{-}-[n]^d_{2\/-} = [m_d,{\ell}_d-1]^{d-1}$ we have
\[
n_1' \geq \frac{m_d}{m_d+1}n - [n]^d_{2\/-} = \frac{m_d}{m_d+1}n',
\]
and since $n_1' + n_2'' = n'$ we also get
\[
n_2'' = n' - n_1' \leq n' - \frac{m_d}{m_d+1}n' = \frac{n'}{m_d+1}.
\]
Since $m_{d-1} \leq m_d$ and the $(d-1)$-PCR of $n'$
is given by $n' = [m_{d-1},{\ell}_{d-1}]^{d-1} + \cdots + [m_{c},{\ell}_{c}]^{c}$,
that $\llbracket n_1'\rrbracket^{d-1}$ has a side that covers
$\llbracket n_2''\rrbracket^{d-2}$ now follows from 
Lemma~\ref{lmm:attached}. This completes the proof of (\ref{eqn:n_1'n_2''})
and hence (\ref{eqn:2leq2}), which then completes the proof
of (\ref{eqn:E=F}) in this case. -- This completes the inductive
proof of $\mathbf{EF}(d,n)$ for all $d,n\in\nats$. By 
Observation~\ref{obs:F-formula} we have the following summarizing 
theorem, the main theorem of this article.
\begin{theorem}
\label{thm:main}
For $n\in\nats$ with the $d$-PCR $n = [m_d,{\ell}_d]^d + 
[m_{d-1},{\ell}_{d-1}]^{d-1} + \cdots + [m_{c},{\ell}_{c}]^{c}$
we have that the maximum number $E_d(n)$ of edges a set $S\subseteq {\nats}^d$
with $|S|=n$ can induce is given by $E_d(n) = dn - \delta_d(n)$
where the discrepancy is given by 
\[
\delta_d(n) = \sum_{i=c}^d
\left(\ell_i[m_i,\ell_i-1]^{i-1} + (i-\ell_i)[m_i,\ell_i]^{i-1}\right).
\]
\end{theorem}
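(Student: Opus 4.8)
The plan is to combine all the machinery developed in Sections~\ref{sec:f-nested}--\ref{sec:E=F} into a single inductive argument on $d+n$, establishing the statement $\mathbf{EF}(d,n): E_d(n) = F_d(n)$ for all $d,n\in\nats$; once this is done, the explicit formula for $\delta_d(n)$ is exactly Observation~\ref{obs:F-formula}, so no further computation is needed. The base cases $d=1$ or $n=1$ are immediate since both $\mathbf{EF}(d,n)$ and $\mathbf{P}(d,n)$ hold trivially there. For the inductive step I would fix $N$, assume $\mathbf{P}(d,n)\wedge\mathbf{EF}(d,n)$ for all $d+n<N$, and prove $\mathbf{P}'(d,n)\wedge\mathbf{EF}(d,n)$ for $d+n=N$; since $\mathbf{P}'(d,n)\Leftrightarrow\mathbf{P}(d,n)$ for fixed $(d,n)$ (the telescoping argument via the sequence $[n_1]^d_{+}<\cdots<[n_1]^d_{j\,+}$ already given in Section~\ref{sec:rec-ineq}), this closes the induction.

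First I would dispatch $\mathbf{P}'(d,n)$ for $d+n=N$, which is precisely implication~(\ref{eqn:1st-impl}) proved at the end of Section~\ref{sec:rec-ineq}: writing $n_1=[n_1]^d_{-}+n_1'$ and $n_2=[n_2]^d_{-}+n_2'$, using the slicing identity~(\ref{eqn:F(n)-i}) for $F_d(n_2)$, choosing the slice-index $i$ so that $i[n_2]^{d-1}_\Delta+n_2'+n_1'=[n_1]^{d-1}_\Delta+n''$ with $0\le n''<[n_2]^{d-1}_\Delta$, and then invoking $\mathbf{EF}(d-1,\cdot)$ together with the super-additivity of $E_{d-1}$ (which follows from its definition~(\ref{eqn:defE(n)})) and $\mathbf{P}(d-1,\cdot)$. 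The geometric interpretation of the box-count $G(n_2,i)$ lets one absorb $n''$ residual points onto a side, handling separately the case $i=\lfloor\sqrt[d]{n_2}\rfloor$ and $i<\lfloor\sqrt[d]{n_2}\rfloor$.

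Next, for $\mathbf{EF}(d,n)$ with $d+n=N$, I would take a fully nested optimal $S\subseteq\nats^d$ (legitimate by Corollary~\ref{cor:max-fully-nested}), assume by Proposition~\ref{prp:n-ass-tight} that $n$ is not a perfect $d$-th power, and apply Observation~\ref{obs:n2-small} to get a cut $S=S_1\cup S_2$ with $S_2$ a side and $n_2=|S_2|\le n/(m_d+1)$, so that~(\ref{eqn:rec-exact2}) gives $E_d(n)=E_d(S_1)+E_{d-1}(S_2)+n_2$. By Observation~\ref{obs:n1-2poss} there are two cases, $[n_1]^d_{-}=[n]^d_{-}$ or $[n_1]^d_{-}=[n]^d_{2\,-}$. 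In the first case, after bounding $E_d(S_1)\le F_d(n_1)$ and $E_{d-1}(S_2)\le F_{d-1}(n_2)$ by induction, expanding $F_d(n_1)$ via~(\ref{eqn:F(n)-rec}), and collapsing $F_{d-1}(n_1')+F_{d-1}(n_2)\le F_{d-1}(n-[n]^d_{-})$ by super-additivity, one lands on $F_d(n)$ directly. The second case is the delicate one: here $n_1'$ and $n_2$ must be recombined across the gap $[n]^d_{-}-[n]^d_{2\,-}=[m_d,\ell_d-1]^{d-1}$, reducing everything to the inequality~(\ref{eqn:2leq2}); when $n_2<[n]^d_{-}-[n]^d_{2\,-}$ this is just $\mathbf{P}(d-1,n_1'+n_2)$, and otherwise one splits $n_2=[m_d,\ell_d-1]^{d-1}+n_2''$, reduces to~(\ref{eqn:n_1'n_2''}), and realizes the right side geometrically by attaching $\llbracket n_2''\rrbracket^{d-2}$ to a side of $\llbracket n_1'\rrbracket^{d-1}$ — which is exactly what Lemma~\ref{lmm:attached} guarantees, given the bounds $n_1'\ge\frac{m_d}{m_d+1}n'$ and $n_2''\le\frac{n'}{m_d+1}$ and $m_{d-1}\le m_d$.

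The main obstacle is the second case of the $\mathbf{EF}$ step: keeping the bookkeeping of the nested pseudo-cubic representations straight while peeling off one layer at a time, and in particular verifying that the ``covering'' hypothesis of Lemma~\ref{lmm:attached} is met after passing from dimension $d$ to dimension $d-1$. Everything else is either a clean application of super-additivity of $E_{d-1}$, the recursions~(\ref{eqn:F(n)-rec}) and~(\ref{eqn:F(pc)-rec}) for $F$, or the equivalence $\mathbf{P}\Leftrightarrow\mathbf{P}'$; the formula for $\delta_d(n)$ in the statement is then read off verbatim from Observation~\ref{obs:F-formula}.
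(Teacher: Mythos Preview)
Your proposal is correct and follows essentially the same route as the paper: the joint induction on $d+n$ for $\mathbf{P}(d,n)\wedge\mathbf{EF}(d,n)$, the equivalence $\mathbf{P}\Leftrightarrow\mathbf{P}'$, the slicing argument with the box-count $G(n_2,i)$ for (\ref{eqn:1st-impl}), the two-case split via Observation~\ref{obs:n1-2poss} for (\ref{eqn:E=F}), and the appeal to Lemma~\ref{lmm:attached} in the delicate second case are exactly how the paper proceeds. One small correction: the $\mathbf{P}\Leftrightarrow\mathbf{P}'$ telescoping and the proof of (\ref{eqn:1st-impl}) live in Section~\ref{sec:cubics}, not Section~\ref{sec:rec-ineq}.
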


\section{Some final observations and corollaries}
\label{sec:some-obs}

\paragraph{An inequality on projections}
Let $S\subseteq {\nats}^d$ be a fully nested set
where $|S| = n = [m,{\ell}]^d$ is a pseudo $d$-cubic.
In this case we have by Theorem~\ref{thm:main}
that 
\[
E_d(n) = E_d([m,{\ell}]^d) = dn - 
{\ell}[m,\ell-1]^{d-1} + (d-{\ell})[m,{\ell}]^{d-1}.
\]
By Observation~\ref{obs:exact-edge-count} we then have
that 
\[
E_d(S) = dn - (n_{\hat{1}}(S) + \cdots + n_{\hat{d}}(S)) 
\leq dn - {\ell}[m,\ell-1]^{d-1} + (d-{\ell})[m,{\ell}]^{d-1},
\]
and hence for any fully nested set $S\subseteq {\nats}^d$
with $|S| = [m,{\ell}]^d$ we have
\begin{equation}
\label{eqn:sum-nested}
n_{\hat{1}}(S) + \cdots + n_{\hat{d}}(S) \geq
{\ell}[m,\ell-1]^{d-1} + (d-{\ell})[m,{\ell}]^{d-1}.
\end{equation}
Recall $g_i$, the gravity along $i$-th axis, from 
Section~\ref{sec:f-nested}. 
For an arbitrary set $T\subseteq {\nats}^2$ 
we have $|\pi_2(g_1(T))| =|\pi_2(T)|$
and  
\[
|\pi_1(g_1(T))| = \max\{|T\cap \pi_2^{-1}(x)|:x\in\nats\}\leq |\pi_1(T)|.
\]
Hence, if $S\subseteq {\nats}^d$ is an arbitrary set (not necessarily
fully nested) then
$n_{\hat{\imath}}(S) = n_{\hat{\imath}}(g_i(S))$
and for $j\neq i$ we have a partition
\[
S = \bigcup_{\tilde{x}\in{\nats}^{d-2}} S_{\tilde{x}},
\]
where $S_{\tilde{x}} = S\cap \pi_{\hat{\imath},\hat{\jmath}}^{-1}(\tilde{x})$,
and we then obtain
\begin{eqnarray*}
n_{\hat{\imath}}(S) & = & 
\left| \pi_{\hat{\imath}}\left( 
  \bigcup_{\tilde{x}\in{\nats}^{d-2}} S_{\tilde{x}} \right) \right| \\
  & =    & \sum_{\tilde{x}\in{\nats}^{d-2}} 
  |\pi_{\hat{\imath}}(S_{\tilde{x}})| \\
  & \geq & \sum_{\tilde{x}\in{\nats}^{d-2}} 
  |\pi_{\hat{\imath}}(g_j(S_{\tilde{x}}))| \\ 
  & =    & 
\left| \pi_{\hat{\imath}}\left( g_j\left( 
  \bigcup_{\tilde{x}\in{\nats}^{d-2}} S_{\tilde{x}}\right) \right) \right| \\
  & =    & 
|\pi_{\hat{\imath}}(g_j(S))|.
\end{eqnarray*}
Since $n_{\hat{\imath}}(S)\geq |\pi_{\hat{\imath}}(g_j(S))|$
for all $i,j\in \{1,\ldots,d\}$ we then obtain for the 
total gravity $g = g_1g_2\cdots g_d$ that
$n_{\hat{\imath}}(S) = |\pi_{\hat{\imath}}(S)|
\geq |\pi_{\hat{\imath}}(g(S))|$.
From this and (\ref{eqn:sum-nested}) we therefore 
we have the following corollary that relates the
cardinality of a point set of ${\nats}^d$ to that of its
projections, in the spirit of Theorem 2 of Bollob\'{a}s and 
Thomason~\cite[Thm 2, p.418]{Bollobas-Thomason}.
\begin{corollary}
\label{cor:linear-proj}
For an arbitrary set $S\subseteq {\nats}^d$ with 
$|S| = [m,{\ell}]^d$, we have
\[
n_{\hat{1}}(S) + \cdots + n_{\hat{d}}(S) \geq
{\ell}[m,\ell-1]^{d-1} + (d-{\ell})[m,{\ell}]^{d-1}.
\]
\end{corollary}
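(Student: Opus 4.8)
The plan is to deduce the general bound from the fully-nested case already settled in (\ref{eqn:sum-nested}), by pushing an arbitrary $S$ to a fully nested set of the same order via the total gravity $g = g_1 g_2 \cdots g_d$. Two easy points set this up. First, $|g(S)| = |S| = [m,\ell]^d$, because each $g_i$ merely permutes points inside the fibers of $\pi_{\hat{\imath}}$ and so is cardinality-preserving. Second, $g(S)$ is fully nested: by (\ref{eqn:ind-nest}) we have $g_i(g(S)) = g(S)$ for every $i$, so $g(S)$ is $i$-nested for each $i$ by Observation~\ref{obs:gi=inest} (equivalently, this is the content of Theorem~\ref{thm:fully-nested}).

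The real work is the projection-monotonicity statement $n_{\hat{\imath}}(S) \ge n_{\hat{\imath}}(g(S))$ for each $i \in [d]$. I would obtain it from the one-step inequality $n_{\hat{\imath}}(S) \ge n_{\hat{\imath}}(g_j(S))$, valid for all $i, j \in [d]$. The case $j = i$ is an equality, since $g_i$ only rearranges points within the fibers of $\pi_{\hat{\imath}}$, so $\pi_{\hat{\imath}}(g_i(S)) = \pi_{\hat{\imath}}(S)$. For $j \ne i$, partition $S$ according to the value of $\pi_{\hat{\imath},\hat{\jmath}}$, so that each block $S_{\tilde{x}} = S \cap \pi_{\hat{\imath},\hat{\jmath}}^{-1}(\tilde{x})$ is a two-dimensional configuration in the $(i,j)$-plane; since the fibers of $\pi_{\hat{\jmath}}$ refine this partition, $g_j$ acts on $S$ blockwise, as the planar gravity along the $j$-axis on each $S_{\tilde{x}}$. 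The planar observation recorded just above the corollary gives $|\pi_{\hat{\imath}}(g_j(S_{\tilde{x}}))| \le |\pi_{\hat{\imath}}(S_{\tilde{x}})|$ for every block, and summing over $\tilde{x}$ yields $n_{\hat{\imath}}(g_j(S)) \le n_{\hat{\imath}}(S)$. Applying this one-step bound successively to $g_{j+1}\cdots g_d(S)$ with index $j$, for $j = d, d-1, \ldots, 1$, chains up to $n_{\hat{\imath}}(S) \ge n_{\hat{\imath}}(g(S))$.

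Putting the pieces together finishes the proof: $g(S)$ is a fully nested set of order $[m,\ell]^d$, so (\ref{eqn:sum-nested}) applies to it, and summing the monotonicity inequality over $i$ gives $n_{\hat{1}}(S) + \cdots + n_{\hat{d}}(S) \ge n_{\hat{1}}(g(S)) + \cdots + n_{\hat{d}}(g(S)) \ge \ell [m,\ell-1]^{d-1} + (d-\ell)[m,\ell]^{d-1}$. I expect the main obstacle to be a matter of care rather than of difficulty: one must verify cleanly that $g_j$ decomposes as a direct sum of planar gravities over the blocks $S_{\tilde{x}}$, and that, restricted to such a block, the projection $\pi_{\hat{\imath}}$ is exactly the planar projection onto the $j$-axis, so that the two-dimensional inequality stated just above can be invoked verbatim. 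Everything else reduces to bookkeeping built on results already established --- chiefly Theorem~\ref{thm:main} (through (\ref{eqn:sum-nested})), Theorem~\ref{thm:fully-nested}, and Observation~\ref{obs:gi=inest}.
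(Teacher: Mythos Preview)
Your proposal is correct and follows essentially the same route as the paper's argument: establish the inequality for fully nested sets via (\ref{eqn:sum-nested}), prove the one-step projection monotonicity $n_{\hat{\imath}}(S)\ge n_{\hat{\imath}}(g_j(S))$ by reducing to the planar observation on the blocks $S_{\tilde{x}}$, chain through $g=g_1\cdots g_d$, and apply (\ref{eqn:sum-nested}) to $g(S)$. The only addition you make explicit that the paper leaves implicit is the verification that $|g(S)|=|S|$ and that $g(S)$ is fully nested, which is harmless bookkeeping.
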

The above corollary can, of course, be generalized to
an inequality for a general $n\in\nats$ in terms
of its $d$-PCR, although  the formula will be more complicated.

\paragraph{The case of $d=2$} 
Any $n\in\nats$ has a $2$-PCR given by 
$n = [m_2,{\ell}_2]^2 + [m_1,0]^1$ where ${\ell}_2 \in\{0,1\}$
and $m_1 = [m_1,0]^1 < [m_2,{\ell}_2]^1$, and 
so $m_1 < [m_2,1]^1 = m_2+1$ if ${\ell}_2=1$ and
$m_1 < [m_2,0]^1 = m_2$ if ${\ell}_2 = 0$.
By Theorem~\ref{thm:main} we have that $E_2(n) = 2n - \delta_2(n)$,
where
\[
\delta_2(n) = 
{\ell}_2[m_2,{\ell}_2-1]^1 + (2-{\ell}_2)[m_2,{\ell}_2]^1 + [m_1,0]^0 = 
{\ell}_2[m_2,{\ell}_2-1]^1 + (2-{\ell}_2)[m_2,{\ell}_2]^1 + 1,
\]
and hence $\delta_2(n) = 2m_2 + 1$ if ${\ell}_2 = 0$ and 
$\delta_2(n) = 2m_2 + 2$ if ${\ell}_2 = 1$. From this we see
that $\delta_2(n) = \left\lceil 2\sqrt{n}\right\rceil$, which agrees
with the formula $E_2(n) = \lfloor 2n - 2\sqrt{n}\rfloor$ given
in~\cite{Harary-Harborth}.

\paragraph{The case $n < 2^d$} In this case 
the $d$-PCR of $n$ has the form
$n = [m_d,{\ell}_d]^d + [m_{d-1},{\ell}_{d-1}]^{d-1} + 
\cdots + [m_{c},{\ell}_{c}]^{c}$ where each $m_i = 1$ and
$d-1 \geq {\ell}_d > {\ell}_{d-1} > \cdots > {\ell}_c \geq 0$,
which is exactly the usual binary representation of 
$n = 2^{{\ell}_d} + 2^{{\ell}_{d-1}} + \cdots + 2^{{\ell}_c}$.
By Theorem~\ref{thm:main} we have that $E_d(n) = dn - \delta_d(n)$,
where
\[
\delta_d(n) =  
\sum_{i=c}^d({\ell}_i2^{{\ell}_i-1} + (i-{\ell}_i)2^{{\ell}_i}) = 
\sum_{i=c}^d2^{{\ell}_i-1}(2i-{\ell}_i).
\]
and hence 
\begin{equation}
\label{eqn:Ed-sumof1}
E_d(n) = \sum_{i=c}^d2^{{\ell}_i-1}(2d + {\ell}_i - 2i).
\end{equation}
Now, since $n<2^d$, the maximum number of edges 
a set $S\subseteq {\nats}^d$ of order $n$ can induce,
is the same as the maximum number of edges a set $S$ of order
$n$ in  any rectangular grid can induce. So, $E_d(n) = f(n)$ where
$f(n)$ is the total number of 1s in the binary representation
of $1,\ldots,n-1$, as first proved in~\cite{Mcllroy-SIAM-binary}
and also stated in~\cite{Geir-Hcube}[Obs.~1.2].
The sequence 
$(f(n))_1^{\infty} = (0,1,2,4,5,7,9,12,13,15,17,20,22,25,28,32,\ldots)$ 
is well known~\cite[A000788]{sum-of-1s},
and has appears naturally when analysing worst-case scenarios 
in sorting algorithms. It has been studied extensively
in a variety of papers, as discussed in detail in~\cite{Geir-Hcube}, 
as it is one of the very few exact known solutions
to a common divide-and-conquer recurrence 
relation~\cite{Geir-Hcube}[Obs.~1.2].

From (\ref{eqn:Ed-sumof1}) we have the following alternative explicit 
formula for $f(n)$ in terms of the binary representation of $n$.
\begin{corollary}
\label{cor:sumof1-formula}
For any $n\in\nats$ with binary representation given
by $n = 2^{{\ell}_d} + 2^{{\ell}_{d-1}} + \cdots + 2^{{\ell}_c}$,
the total number of 1s appearing in the binary representations of 
$1,\ldots,n-1$ is given by 
\[
f(n) = \sum_{i=c}^d2^{{\ell}_i-1}(2d + {\ell}_i - 2i).
\]
\end{corollary}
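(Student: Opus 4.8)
The plan is to deduce Corollary~\ref{cor:sumof1-formula} directly from the machinery already assembled, so the ``proof'' is really just an identification of two quantities that the preceding text has, in effect, already equated. First I would recall the two facts established earlier. On one hand, Theorem~\ref{thm:main} gives $E_d(n) = dn - \delta_d(n)$ with the explicit discrepancy $\delta_d(n) = \sum_{i=c}^d(\ell_i[m_i,\ell_i-1]^{i-1} + (i-\ell_i)[m_i,\ell_i]^{i-1})$; in the regime $n < 2^d$ every $m_i = 1$, so $[1,\ell-1]^{i-1} = 2^{\ell-1}$ and $[1,\ell]^{i-1} = 2^{\ell}$, and the algebra in the ``case $n<2^d$'' paragraph already reduces this to equation~(\ref{eqn:Ed-sumof1}), namely $E_d(n) = \sum_{i=c}^d 2^{\ell_i-1}(2d + \ell_i - 2i)$. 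On the other hand, the same paragraph records the classical fact that for $n < 2^d$ the quantity $E_d(n)$ coincides with $f(n)$, the number of $1$s in the binary expansions of $1,\dots,n-1$ (this is the Mcllroy / hypercube result cited as~\cite{Mcllroy-SIAM-binary} and~\cite{Geir-Hcube}). Combining these two displays yields the claimed formula for $f(n)$.

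So the proof I would write is short: let $n = 2^{\ell_d} + 2^{\ell_{d-1}} + \cdots + 2^{\ell_c}$ be the binary expansion, which (as noted) is exactly the $d$-PCR of $n$ with all $m_i = 1$ and $d-1 \ge \ell_d > \ell_{d-1} > \cdots > \ell_c \ge 0$. Since $n < 2^d$, every induced subgraph of $\ints^d$ on $n$ vertices fits inside a hypercube $Q_d$ (indeed inside $[2]^d$), and the extremal count there is $f(n)$ by the cited literature; hence $E_d(n) = f(n)$. Apply Theorem~\ref{thm:main} and substitute $m_i = 1$ throughout $\delta_d(n)$ to obtain $\delta_d(n) = \sum_{i=c}^d 2^{\ell_i-1}(2i - \ell_i)$, whence $E_d(n) = dn - \delta_d(n) = \sum_{i=c}^d 2^{\ell_i}\, d - \sum_{i=c}^d 2^{\ell_i-1}(2i-\ell_i) = \sum_{i=c}^d 2^{\ell_i-1}(2d + \ell_i - 2i)$, using $n = \sum 2^{\ell_i}$. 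Equating the two expressions for $E_d(n)$ gives the stated formula for $f(n)$. \qed

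There is essentially no obstacle here beyond bookkeeping: the only things one must be careful about are (i) checking that the binary expansion really is the $d$-PCR — this needs $\ell_d \le d-1$, which is exactly the hypothesis $n < 2^d$, and the strict inequalities among the $\ell_i$ come from $m_i = 1$ in Proposition~\ref{prp:!-pc-rep}; and (ii) correctly substituting the degenerate pseudo-cubic values $[1,\ell]^{j} = 2^{\ell}$ into the discrepancy formula of Theorem~\ref{thm:main}. If anything is ``hard'' it is purely the choice of whether to present this as a standalone manipulation of $\delta_d$ or simply to point back at equation~(\ref{eqn:Ed-sumof1}), which already contains the identical right-hand side; I would do the latter, making the corollary an immediate one-line consequence of~(\ref{eqn:Ed-sumof1}) together with the identification $E_d(n) = f(n)$ for $n < 2^d$.
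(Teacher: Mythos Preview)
Your proposal is correct and follows exactly the paper's approach: the corollary is presented there as an immediate consequence of equation~(\ref{eqn:Ed-sumof1}) together with the cited identification $E_d(n)=f(n)$ for $n<2^d$, and your write-up reproduces precisely that derivation (including the substitution $m_i=1$ in $\delta_d(n)$ and the observation that the binary expansion is the $d$-PCR in this range).
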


\subsection*{Acknowledgments}  

Geir Agnarsson wants to thank Kshitij Lauria for 
contacting him via email, asking pointed and interesting questions,
and for keeping this project alive.

\flushright{\today}

\end{document}